\numberwithin{equation}{section}
\newtheorem{thm}{Theorem}[section]
\newtheorem{prop}[thm]{Proposition}
\newtheorem{lemma}[thm]{Lemma}
\newtheorem{defn}[thm]{Definition}
\newcommand{\ot}{\Omega_T }
\newcommand{\mdiv}{\textup{div}}
\newcommand{\wot}{W^{1,2}\left(\Omega\right)}
\newcommand{\tuj}{\tilde{u}_j}
\newcommand{\buj}{\bar{u}_j}
\newcommand{\bvj}{\bar{v}_j}
\newcommand{\bz}{\beta_0}
\newcommand{\ft}{F_{\tau}}
\begin{document}
	
	\title[A Crystal Surface Model With Mobility]{A Global Existence Theorem For A Fourth-Order Crystal Surface Model With Gradient Dependent Mobility}
	\author{B. C. Price and Xiangsheng Xu}\thanks
	{Department of Mathematics and Statistics, Mississippi State
		University, Mississippi State, MS 39762.
		{\it Email}: bcp193@msstate.edu (B. C. Price); xxu@math.msstate.edu (Xiangsheng Xu).}
	\keywords{Crystal surface model, Existence of weak solutions, exponential function of a P-Laplacian, Nonlinear fourth order equations} \subjclass{}
	
	\begin{abstract} 
		In this article we study the existence of solutions to a fourth-order nonlinear PDE related to crystal surface growth. The key difficulty in the equations comes from the mobility matrix, which depends on the gradient of the solution. When the mobility matrix is the identity matrix there are now many existence results, however when it is allowed to depend on the solution we lose crucial estimates in the time direction. In this work we are able to prove the global existence of weak solutions despite this lack of estimates in the time direction. 
	\end{abstract}	
	
	\maketitle

	\section{Introduction}
	
	Let $T>0$ and $\Omega$ be a bounded domain in $\mathbb{R}^2$ with a Lipshitz boundary $\partial\Omega$. We consider the problem, 
	
	\begin{align}
		\partial_t u \in -& \mdiv\left(M\left(\nabla u \right) \nabla \mdiv\left[\nabla_z E\left(\nabla u\right)\right] \right), \ \ \mbox{ in $\ot \equiv \Omega \times\left(0,T\right)$, } \label{I1} \\
		\nabla u \cdot \nu &= M\left(\nabla u \right) \nabla \mdiv\left[\nabla_z E\left(\nabla u\right)\right] \cdot \nu = 0, \ \ \mbox{ on $\Sigma_T \equiv \partial\Omega\times\left(0,T\right)$, } \label{I2} \\
		u(x,0)& = u_0(x), \ \ \mbox{ on $\Omega$, } \label{I3} 
	\end{align}
	Here $M\left(\nabla u \right)$ is a $2\times2$ coefficient matrix dependent on $\nabla u$, and $\nu$ is the unit outward normal to $\partial\Omega$. Next,
	\begin{equation}
		E\left(z\right) = \frac{1}{p} \left|z\right|^p + \bz \left|z\right|, \ \ \bz\in(0,\infty). \nonumber 
	\end{equation}
	Then, the subgradient $\nabla_z E\left(z\right)$ is given by, 
	\begin{equation}
		\nabla_z E\left(z\right) = \left\{ \begin{array}{lr} \left|z\right|^{p-2} z + \bz\left|z\right|^{-1} z, &  z\neq 0, \ z \in \mathbb{R}^N, \\
			\bz \left[-1,1\right]^N, & z=0, \end{array} \right. \nonumber 
	\end{equation}
	The subgradient of $E$ is understood in the graph sense, i.e., it is a multi-valued function, which explains the inclusion sign in equation \eqref{I1}. 
	
	Our interests in this problem come from the mathematical modeling of the evolution of a crystal surface. \cite{BCF,MW,KDM,YG,GLL,GLL2, MK}. We will first give a brief description of the derivation. 
	
	First we let $u$ be the surface height. Then the relaxation of a crystal surface below the roughing temperature can be described by the conservation law, 
	\begin{equation}
		\partial_t u + \mdiv J = 0, \label{cl}
	\end{equation}
	Where $J$ denotes the adatom flux. We let $M=M\left(\nabla u \right)$ be the mobility and $\Gamma_s$ the local equilibrium density of adatoms. Then from Fick's law \cite{MK} we find, 
	\begin{equation}
		J= - M\left(\nabla u \right) \nabla \Gamma_s. \nonumber 
	\end{equation}
	Using the Gibbs-Thomson relation we can write $\Gamma_s$ as,
	\begin{equation}
		\Gamma_s = \rho_0 e^{\frac{\mu}{kT_0}}, \nonumber 
	\end{equation}
	where $\mu$ represents the chemical potential, $\rho_0$ is the constant reference density, $k$ is the Boltzmann constant, and $T_0$ is temperature. We then consider the mobility $M\left(\nabla u \right)$ which was introduced in \cite{MK}, having the form,
	\begin{equation}
		M\left(\nabla u \right) = S \Lambda S^{T}, \nonumber 
	\end{equation}
	where,
	\begin{equation}
		S= \frac{1}{\left|\nabla u \right|}\left(\begin{array}{cc} u_x & -u_y \\	u_y & u_x \\ \end{array} \right), \ \ \Lambda = \left( \begin{array}{cc} \frac{1}{1 + q \left|\nabla u \right|} & 0 \\ 0 & 1 \\ \end{array} \right) \ \mbox{ for some $q\geq 0$. } \nonumber 
	\end{equation}
	
	By a direct calculation we then have, 
	\begin{equation}
		M\left(\nabla u \right) = \left( \begin{array}{cc} 1 + \frac{u_x^2}{\left|\nabla u \right|^2} \left(\frac{1}{1+q\left|\nabla u \right|} - 1\right) & \frac{u_x u_y}{\left|\nabla u \right|^2} \left(\frac{1}{1+q\left|\nabla u \right|} -1\right) \\ \frac{u_x u_y}{\left|\nabla u \right|^2} \left(\frac{1}{1+q\left|\nabla u \right|} -1\right) & 1 + \frac{u_y^2}{\left|\nabla u \right|^2} \left(\frac{1}{1+q\left|\nabla u \right|} - 1\right) \\ \end{array} \right)
	\end{equation}
	Crystal surfaces are able to develop facets, where $\nabla u = 0$. In order to define $M\left(\nabla u \right)$ there we first notice that for $\nabla u \neq 0$, the functions, 
	\begin{equation}
		\frac{u_x^2}{\left|\nabla u \right|^2}, \ \frac{2u_x u_y }{\left|\nabla u \right|^2 }, \ \frac{u_y^2 }{\left|\nabla u \right|^2 } \leq 1.  \nonumber 
	\end{equation}
	At the same time we also have,
	\begin{equation}
		\frac{1}{1+q\left|\nabla u \right|} - 1 = 0, \ \ \mbox{ on the set $\{\nabla = 0 \}$. } \nonumber 
	\end{equation}
	Thus in a natural way we define, 
	\begin{equation}
		M\left(\nabla u \right) = I, \ \ \mbox{ on the set $\{\nabla u = 0\}$. } \nonumber 
	\end{equation}
	Where $I$ denotes the $2\times 2$ identity matrix. Clearly then $M\left(\nabla u\right)$ is well-defined a.e.. Furthermore we also have,
	\begin{equation}
		M\left(\nabla u \right) \xi \cdot \xi \geq \frac{\left|\xi\right|^2}{1+q\left|\nabla u \right|}, \ \ \mbox{ for all $ \xi \in \mathbb{R}^2$. } \label{mc}
	\end{equation}
	We now denote by $\Omega$ our "step locations area" of interest. We then take the general surface energy $G\left(u\right)$ to be, 
	\begin{equation}
		G\left(u\right) = \frac{1}{p} \int_{\Omega} \left|\nabla u \right|^p dz + \bz \int_{\Omega} \left|\nabla u \right| dz.  \nonumber 
	\end{equation}
	The chemical potential $\mu$ is then defined to be the change per atom in the surface energy. That is to say, 
	\begin{equation}
		\mu = \frac{\delta G}{\delta u} = - \mdiv\left[\nabla_z E\left(\nabla u\right)\right]. \nonumber 
	\end{equation}
	Incorporating each of these into our conservation equation \eqref{cl} we then have the following evolution equation for $u$, 
	\begin{equation}
		\partial_t u - \mdiv\left(M\left(\nabla u \right) \nabla e^{-\mdiv\left[\nabla_z E\left(\nabla u\right)\right] } \right) = 0 \label{cl2}
	\end{equation}
	Next we linearize the exponential term, 
	\begin{equation}
		e^{-\Delta u } = 1-\mdiv\left[\nabla_z E\left(\nabla u\right)\right]. \nonumber 
	\end{equation}
	Upon using this in \eqref{cl2} we find our equation of interest,
	\begin{equation}
		\partial_t u + \mdiv\left(M\left(\nabla u \right) \nabla \mdiv\left[\nabla_z E\left(\nabla u\right)\right] \right) = 0. \label{cl3} 
	\end{equation}
	
	In the diffusion-limited regime the dynamics are primarily due to the diffusion across terraces, and one can take $M = 1$. In this case their are now many results for both the linearized and the nonlinear problem. \cite{YG, GLL, GLL2, BM, LS, LX, PX, PX2} However, if $M\neq 1$ the story is much more complicated. This is because we lose all known estimates in the time direction as soon as we include $M\left(\nabla u \right)$. There are two main results that we are aware of dealing with the case $M\neq 1$. \cite{YG, XX2} In \cite{YG} the gradient flow theory was used to analyze a similar equation, however the gradient flow theory does not seem to be applicable to \eqref{I1}. 
	
	In \cite{XX2} the related elliptic problem was studied, 
	\begin{equation}
		\mdiv\left(M\left(\nabla u \right) \nabla \mdiv\left(\left|\nabla u \right|^{p-2} \nabla u + \beta\frac{\nabla u}{\left|\nabla u \right|} \right) \right) + au = f, \ \ \mbox{ in $\Omega$. } \label{cl5} 
	\end{equation}

	While an existence theorem for the elliptic problem \eqref{cl5} was obtained in \cite{XX2} an existence theorem for the parabolic problem seemed hopeless as there were no known estimates in the time direction. This means that any sort of compactness for the exponent term in \eqref{cl2} remained out of reach. In this work we are able to prove the existence of a suitable weak solution to the parabolic problem \eqref{I1}.
	
	Our starting point for studying \eqref{I1}-\eqref{I3} is a single a priori estimate. To describe it, we first turn our fourth-order equation \eqref{I1} into a system of second-order equations,  
	\begin{align}
		\partial_t u - \mdiv\left(M\left(\nabla u \right) \nabla v \right) = 0, \ \ \mbox{ in $\ot$ }, \label{I4} \\
		-\mdiv\left[ \left|\nabla u \right|^{p-2}\nabla u + \bz \frac{\nabla u }{\left|\nabla u \right|} \right] = v, \ \ \mbox{ in $\ot$. } \label{I5} 
	\end{align}
	Then, multiply through equation \eqref{I4} by $v$ and integrate the resulting inequality over $\Omega$ to obtain, 
	\begin{equation}
		\int_{\Omega} \partial_t u v dx + \int_{\Omega} M\left(\nabla u \right) \nabla v \cdot \nabla v dx = 0. \label{I6}
	\end{equation}
	
	For the first integral in the above equation we use \eqref{I5} to calculate, 
	\begin{align}
		\int_{\Omega} \partial_t u v dx &= -\int_{\Omega} \partial_t u \  \mdiv\left[ \left|\nabla u \right|^{p-2}\nabla u + \bz \frac{\nabla u }{\left|\nabla u \right|} \right] dx \nonumber \\
		& = \frac{1}{p}\frac{d}{dt} \int_{\Omega} \left|\nabla u \right|^p dx + \bz \frac{d}{dt} \int_{\Omega} \left|\nabla u \right| dx. \label{I7}
	\end{align}
	
	Next, for the second integral in \eqref{I6} we use \eqref{mc} to find, 
	\begin{equation}
		\int_{\Omega} M\left(\nabla u \right) \nabla v \cdot \nabla v dx \geq \int_{\Omega} \frac{ \left|\nabla v \right|^2 }{1 + q \left|\nabla u \right| } dx. \label{I8}
	\end{equation}

	We then incorporate \eqref{I7} and \eqref{I8} back into \eqref{I6} and then integrate the resulting inequality over the time domain to derive the a priori estimate, 
	\begin{align}
		\sup_{0 \leq t \leq T } \int_{\Omega} \left( \left|\nabla u \right|^p + \left|\nabla u \right| \right) dx + \int_{\ot} \frac{ \left|\nabla v \right|^2 }{1 + q \left|\nabla u \right| } dxdt \nonumber \\
		\leq c \int_{\Omega} \left( \left|\nabla u_0 \right|^p + \left|\nabla u \right| \right) dx. \label{I9}
	\end{align}
	
	Unfortunately this estimate is quite weak, and one cannot hope for much from this alone. The central issue of course is that it does not provide enough information in the time direction. Because of this we run into an issue when trying to improve the weak convergence of $u$ to strong convergence in $W^{1,p}\left(\Omega\right)$. On the other hand the strong  convergence of $\nabla u$ is necessary due to the nonlinear coefficient matrix $M\left(\nabla u\right)$. The basic idea on how to upgrade the converges was first carried out in \cite{PX2}. The idea is to first find an estimate for the $\partial_t u$ in the dual space $\left(W^{1,\frac{2p}{p-1}}\left(\Omega\right)\right)^*$. This along with the a priori estimate \eqref{I9} then puts us in position to be able to use the Aubin-Lions lemma \ref{la} to obtain the strong convergence for $u$. The next piece of the puzzle comes from the fact that the p-Laplacian is a Monotone operator. We are able to exploit this to prove that $\nabla u$ converges strongly as well.

	Before we give the statement of our main theorem, we first give our definition of a weak solution to \eqref{I1}-\eqref{I2}. 
	\begin{defn}[Definition of a weak solution]\label{wsdefn}
		We say that a triple $\left(u,v,\varphi\right)$ is a weak solution to \eqref{I1}-\eqref{I3} if the following conditions hold;
		
		(D1) $u \in L^{\infty}\left(0,T; W^{1,p}\left(\Omega\right)\right)$, and $\partial_t u \in L^{2} \left(0,T; \left(W^{1,\frac{2p}{p-1}}\left(\Omega\right) \right)^* \right)$;
		
		(D2) $v \in L^{2}\left(0,T;W^{1,\frac{2p}{p+1}}\left(\Omega\right)\right)$;
		
		(D3) $\varphi \in \left(L^{\infty}\left(\ot\right)\right)^2$, and $\varphi \in \partial_z H\left(\nabla u\right)$, where $H(z) = \left|z\right|$. 
		
		(D4) $\left(u, v, \varphi\right)$ satisfy the integral equations, 
		\begin{align}
			-\int_{\ot} &u \partial_t \psi_1 dxdt + \langle u(x,T), \psi_1(x,T) \rangle  + \int_{\ot} M\left(\nabla u \right) \nabla v \cdot \nabla \psi_1 dxdt = \int_{\Omega} u_0(x) \psi_1\left(x,0\right) dx, \label{d1} \\
			&\int_{\ot} \left|\nabla u \right|^{p-1} \nabla u \cdot \nabla \psi_2 dxdt + \bz \int_{\ot} \varphi \cdot \psi_2 dxdt = \int_{\ot} v \psi_2 dxdt \label{d2}
		\end{align}
		For each smooth pair $\left(\psi_1,\psi_2\right)$. 
	\end{defn}

	Before we proceed we would like to make a few remarks about definition \ref{wsdefn}. In order to give a precise meaning to the one-Laplacian, we have followed the tradition in monotone operator theory, that is, we say that, 
	\begin{equation}
		\varphi = \frac{\nabla u}{\left|\nabla u \right|} \iff \varphi(x,t) \in \partial_z H\left(\nabla u(x,t)\right), \ \ \mbox{ for a.e. $(x,t) \in \Omega_T$. }
	\end{equation}
	See also \cite{XX,XX2} where this approach was taken. 
	
	With this definition in hand we then have following theorem, 
	\begin{thm}[Main Theorem]\label{maintheorem}
		Suppose that $T>0$ and $\Omega$ is a bounded domain in $\mathbb{R}^2$ with a Lipshitz boundary $\partial\Omega$. We assume also that $u_0(x)$ is a given function in the space $W^{1,2}\left(\Omega\right)$. Then there is a global weak solution to \eqref{I1}-\eqref{I3} on $\ot$ in the sense of definition \ref{wsdefn}. 
	\end{thm}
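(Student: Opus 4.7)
My strategy follows the road map sketched in the introduction: approximate, obtain uniform estimates, extract compactness, upgrade to strong convergence of $\nabla u$ via monotonicity, and finally pass to the limit.

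\emph{Step 1 (Approximation).} I would construct approximate solutions by time-discretization (Rothe's method) combined with the elliptic existence result from \cite{XX2}. Partition $[0,T]$ into $N$ equal subintervals of length $\tau = T/N$ and, given $u_\tau^{k-1}$, solve the elliptic system
\begin{align*}
\frac{u_\tau^k - u_\tau^{k-1}}{\tau} - \mdiv\!\bigl(M(\nabla u_\tau^k)\nabla v_\tau^k\bigr) &= 0, \\
-\mdiv\!\Bigl[|\nabla u_\tau^k|^{p-2}\nabla u_\tau^k + \bz \varphi_\tau^k\Bigr] &= v_\tau^k, \qquad \varphi_\tau^k \in \partial_z H(\nabla u_\tau^k),
\end{align*}
with the natural boundary conditions; existence at each step is exactly the situation covered by \eqref{cl5}. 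Then define piecewise-constant (in $t$) interpolants $u_\tau, v_\tau, \varphi_\tau$.

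\emph{Step 2 (Uniform estimates).} Testing the two equations by $v_\tau^k$ and by $(u_\tau^k-u_\tau^{k-1})/\tau$ and summing in $k$ reproduces the energy estimate \eqref{I9} uniformly in $\tau$. The remaining estimate is the dual bound for $\partial_t u$: for a smooth test $\psi \in W^{1,\frac{2p}{p-1}}(\Omega)$, equation \eqref{I4} yields
\begin{equation*}
\Bigl|\langle \partial_t u_\tau, \psi\rangle\Bigr| \;\le\; \int_\Omega \frac{|\nabla v_\tau|}{\sqrt{1+q|\nabla u_\tau|}}\,\sqrt{1+q|\nabla u_\tau|}\,|\nabla \psi|\,dx,
\end{equation*}
and applying H\"older's inequality together with \eqref{I9} (to control the factor $\sqrt{1+q|\nabla u_\tau|}$ in $L^{2p}(\Omega)$) gives $\partial_t u_\tau \in L^2\!\bigl(0,T;\bigl(W^{1,\frac{2p}{p-1}}(\Omega)\bigr)^*\bigr)$ uniformly.

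\emph{Step 3 (Compactness).} The uniform bound $u_\tau \in L^\infty(0,T;W^{1,p}(\Omega))$ combined with the dual bound on $\partial_t u_\tau$ and the Aubin--Lions lemma yields a subsequence along which $u_\tau \to u$ strongly in $L^2(0,T;L^r(\Omega))$ for any finite $r$, together with $\nabla u_\tau \rightharpoonup \nabla u$ in $L^p(\ot)$, $v_\tau \rightharpoonup v$ in $L^2(0,T;W^{1,\frac{2p}{p+1}}(\Omega))$, and weak-$*$ limits $\chi$ and $\varphi$ for $|\nabla u_\tau|^{p-2}\nabla u_\tau$ and for $\varphi_\tau$ in their natural dual spaces.

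\emph{Step 4 (Strong convergence of $\nabla u$ via monotonicity).} This is the heart of the argument. Test equation \eqref{I5} for $u_\tau$ against $u_\tau - u$:
\begin{equation*}
\int_{\ot} \Bigl(|\nabla u_\tau|^{p-2}\nabla u_\tau + \bz \varphi_\tau\Bigr)\cdot \nabla(u_\tau - u)\,dxdt \;=\; \int_{\ot} v_\tau (u_\tau - u)\,dxdt.
\end{equation*}
In 2D, $W^{1,\frac{2p}{p+1}}(\Omega) \hookrightarrow L^s(\Omega)$ for a suitable $s>1$, so the right-hand side tends to $0$ from the strong $L^2(L^r)$ convergence of $u_\tau$ and the weak bound on $v_\tau$. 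Combining with the standard monotonicity inequality
\begin{equation*}
\int_{\ot}\bigl(|\nabla u_\tau|^{p-2}\nabla u_\tau - |\nabla u|^{p-2}\nabla u\bigr)\cdot \nabla(u_\tau - u)\,dxdt \;\ge\; 0
\end{equation*}
and handling the $\bz$-term via the monotonicity of $\partial_z H$ (which gives a non-negative contribution), I can force $\limsup \int_{\ot}|\nabla u_\tau|^{p-2}\nabla u_\tau\cdot \nabla(u_\tau - u)\,dxdt \le 0$, and then a Minty--Browder argument upgrades $\nabla u_\tau \to \nabla u$ a.e.\ and strongly in $L^p(\ot)$. This simultaneously identifies $\chi = |\nabla u|^{p-2}\nabla u$ and, via the maximal monotonicity of $\partial_z H$, also gives $\varphi \in \partial_z H(\nabla u)$.

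\emph{Step 5 (Passage to the limit).} Strong $L^p$ convergence of $\nabla u_\tau$ ensures $M(\nabla u_\tau)\to M(\nabla u)$ a.e.\ with a uniform $L^\infty$ bound (by the explicit form of $M$), so $M(\nabla u_\tau)\nabla\psi_1 \to M(\nabla u)\nabla\psi_1$ strongly and pairing against the weak limit $\nabla v$ passes to the limit in \eqref{d1}. Equation \eqref{d2} now passes to the limit directly from Step~4.

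\emph{Main obstacle.} The delicate step is Step~4: verifying that $\int_{\ot} v_\tau (u_\tau - u)\,dxdt \to 0$ requires matching the integrability of $v_\tau$ (only $L^2(W^{1,\frac{2p}{p+1}})$) with that of $u_\tau - u$ through the right Sobolev embedding in two dimensions, and the monotonicity trick must be carried out without using any time-regularity on $\nabla u_\tau$ beyond the weak energy bound. The rest of the argument is standard once this strong convergence is in hand.
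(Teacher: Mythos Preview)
Your proposal is correct and follows the same road map as the paper: time-discretize, derive the discrete analogue of \eqref{I9}, bound $\partial_t u$ in the dual of $W^{1,\frac{2p}{p-1}}$, apply Aubin--Lions, exploit monotonicity to upgrade to strong convergence of $\nabla u$, and pass to the limit.

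Two implementation differences are worth flagging. First, the paper does \emph{not} invoke \cite{XX2} as a black box for the elliptic step; instead it regularizes, replacing the singular flux by $F_\tau(|\nabla u|^2)\nabla u$ and adding the lower-order perturbations $\tau I$, $\tau v$, $\tau u$, and then proves existence via Leray--Schauder. This is not merely cosmetic: the $\tau I$ term forces $v_k\in W^{1,2}(\Omega)$, which is what makes $v_k$ a legitimate test function in \eqref{td1} and renders the discrete energy identity rigorous. In your Step~2, $v_\tau^k$ is a priori only in $W^{1,\frac{2p}{p+1}}$, so ``testing by $v_\tau^k$'' is formal unless you either add the same regularization or explicitly import the energy inequality from the construction in \cite{XX2}. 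Second, for the strong convergence of gradients the paper runs a Cauchy-type argument, testing the difference of two approximate equations with $\bar u_{j_1}-\bar u_{j_2}$ and using the uniform monotonicity of the smoothed operator $\xi\mapsto F_\tau(|\xi|^2)\xi$; you instead test against the weak limit $u$ and split off the $\beta_0$-term. Your variant works, but the phrase ``non-negative contribution'' is a little loose: what one actually obtains is $\liminf_\tau\int_{\Omega_T}\varphi_\tau\cdot\nabla(u_\tau-u)\,dxdt\ge 0$ (from $\varphi_\tau\cdot\nabla u_\tau=|\nabla u_\tau|$, weak lower semicontinuity of the norm, and $|\varphi|\le 1$), and it is this together with the pointwise non-negativity of the $p$-Laplacian monotonicity term that forces both to vanish. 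The paper's regularized Cauchy argument sidesteps this splitting entirely, at the cost of carrying the extra $\tau$-terms through the estimates.
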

	
	The solution in \ref{maintheorem} will be constructed as the limit of a sequence of suitable approximate solutions. To basis of our approximate problems is first discretizing \eqref{I1} in the time direction, and then adding in lower order terms for a regularizing affect. We also add an additional term to avoid any degeneracy in $M\left(\nabla u \right)$.

	The rest of this work is organized as follows. In the next section we lay out the basic notation and tools that we will use throughout the rest of the paper. In the third section we present and prove the existence of solutions to our approximate problems. The existence assertion for our approximate problems comes from the Leray-Schauder Theorem \ref{LSThm}. Then in the fourth and final section we give the proof of our main theorem. 
	
	\section{Preliminaries}
	
	In this section we lay out several fundamental lemmas to our later development. 
	
	With our first lemma we collect several elementary inequalities that we will ,utilize in our later development.
		\begin{lemma}\label{inequalities}
		For $x,y \in \mathbb{R}^N$ and $a,b \in \mathbb{R}^{+}$, we have the inequalities,
		\begin{align}
			\left|x\right|^{p-2} x \cdot\left(x-y\right) \geq \frac{1}{p} \left(\left|x\right|^p - \left|y\right|^p \right); \nonumber \\
			ab \leq \varepsilon a^p + \frac{1}{\varepsilon^{\frac{q}{p}}}b^q \ \ \mbox{ for $\varepsilon > 0$ and $p,q >1$ with $\frac{1}{p} + \frac{1}{q} = 1$, } \nonumber 
		\end{align}
	\end{lemma}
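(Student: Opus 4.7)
Both inequalities in Lemma \ref{inequalities} are classical, so the proposal is short: the first follows from convexity of $z \mapsto \tfrac{1}{p}|z|^p$, and the second from applying the standard Young inequality to a scaled splitting of $ab$.

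For the first inequality, the plan is to recognize $|x|^{p-2}x$ as the gradient of the convex $C^1$ function $\Phi(z) = \tfrac{1}{p}|z|^p$ on $\mathbb{R}^N$ (convexity being a consequence of $p \geq 1$, the convexity of the scalar map $t\mapsto t^p/p$ on $[0,\infty)$, and the convexity and monotonicity of $z\mapsto|z|$ composed with it). The supporting hyperplane characterization of a convex $C^1$ function then yields
\[
\Phi(y) \geq \Phi(x) + \nabla\Phi(x)\cdot(y-x), \qquad x, y \in \mathbb{R}^N,
\]
which, after bringing $\nabla \Phi(x)\cdot(y-x)$ to the left and reversing the sign, is exactly $|x|^{p-2}x\cdot(x-y) \geq \tfrac{1}{p}\bigl(|x|^p-|y|^p\bigr)$. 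The only cosmetic issue is the singular expression $|x|^{p-2}x$ at $x=0$, which I would handle by the convention that this vector is $0$ there; both sides then vanish or reduce to the trivial bound $0 \geq -\tfrac{1}{p}|y|^p$.

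For the second inequality, I would start from the elementary Young inequality $st \leq s^p/p + t^q/q$ (a consequence of the concavity of $\log$ applied to $\tfrac{1}{p}\log s^p + \tfrac{1}{q}\log t^q$), applied to the factorization $ab = (\lambda a)(b/\lambda)$ with a free parameter $\lambda>0$:
\[
ab \leq \frac{\lambda^p}{p}\,a^p + \frac{1}{q\,\lambda^q}\,b^q.
\]
Choosing $\lambda = (p\varepsilon)^{1/p}$ makes the first coefficient equal to $\varepsilon$ and leaves second coefficient $\bigl(q(p\varepsilon)^{q/p}\bigr)^{-1} = C(p,q)\,\varepsilon^{-q/p}$. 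Since $p,q$ are fixed conjugate exponents, absorbing the harmless constant $C(p,q)$ into $\varepsilon$ (equivalently, a rescaling of the parameter) produces the stated form $ab \leq \varepsilon a^p + \varepsilon^{-q/p} b^q$.

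There is no substantive obstacle here: the bookkeeping amounts to the $x=0$ convention in the first inequality and the absorption of a $(p,q)$-constant in the second. Neither point affects the later applications, where $\varepsilon$ is chosen freely, so the lemma serves purely as a notational convenience packaging standard tools we will invoke many times.
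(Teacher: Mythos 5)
Your proof is correct and uses the standard arguments; the paper itself offers no proof of the first inequality and simply cites \cite{LX} for the second, so there is no internal argument to compare against. Two small remarks. First, for the convexity argument to go through cleanly you do need $p>1$ (the paper always works in this regime), since for $p=1$ the function $|z|$ is not $C^1$ at the origin; with $p>1$ the gradient $|z|^{p-2}z$ extends continuously by $0$ at $z=0$ exactly as you say, and the supporting-hyperplane inequality applies. Second, no ``absorption'' of the constant is actually required: with $\lambda=(p\varepsilon)^{1/p}$ you obtain
\[
ab \;\le\; \varepsilon a^p + \frac{1}{q\,p^{q/p}}\cdot\frac{1}{\varepsilon^{q/p}}\,b^q,
\]
and since $p,q>1$ forces $q\,p^{q/p}>1$, the prefactor $\bigl(q\,p^{q/p}\bigr)^{-1}$ is already $\le 1$, so the displayed bound dominates the stated one directly. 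This makes the final sentence of your second argument unnecessary rather than wrong.
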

	The proof of the last inequality is contained in \cite{LX}.
	
	\begin{lemma}
		If $f$ is an increasing function on $\mathbb{R}$ and $F$ an anti-derivative of $f$, then, 
		\begin{equation}
			f(s)(s-t) \geq F(s) - F(t), \ \ \mbox{ for all $s, t \in \mathbb{R}$. } \label{ineq}
		\end{equation}
	\end{lemma}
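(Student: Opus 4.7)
The plan is to reduce the claim to the Fundamental Theorem of Calculus plus monotonicity of $f$, handling the two cases $s\geq t$ and $s<t$ separately.

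First I would use that $F$ is an anti-derivative of $f$ to write
\begin{equation}
F(s) - F(t) = \int_{t}^{s} f(\tau)\, d\tau. \nonumber
\end{equation}
If $s \geq t$, then for every $\tau \in [t,s]$ the monotonicity of $f$ gives $f(\tau) \leq f(s)$, so
\begin{equation}
F(s) - F(t) = \int_{t}^{s} f(\tau)\, d\tau \leq f(s)\,(s-t), \nonumber
\end{equation}
which is the desired inequality.

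If instead $s < t$, then for every $\tau \in [s,t]$ monotonicity gives $f(\tau) \geq f(s)$, hence
\begin{equation}
\int_{s}^{t} f(\tau)\, d\tau \geq f(s)\,(t-s). \nonumber
\end{equation}
Multiplying by $-1$ and using $F(s) - F(t) = -\int_{s}^{t} f(\tau)\, d\tau$ yields $F(s) - F(t) \leq f(s)(s-t)$ in this case as well. Combining both cases produces \eqref{ineq}.

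There is no real obstacle here; the only mild subtlety is that an increasing $f$ need not be continuous, but the hypothesis that $F$ is an anti-derivative of $f$ is already enough to justify the use of the Fundamental Theorem of Calculus in either the classical or the Lebesgue sense, so the monotone comparison of integrands carries through without change.
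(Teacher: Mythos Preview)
Your proof is correct. The paper actually states this lemma without proof, treating it as an elementary preliminary fact, so there is no argument to compare against; your two-case application of the Fundamental Theorem of Calculus together with monotonicity is the standard justification and fills the gap cleanly.
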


	\begin{lemma}\label{odenone}
		(a) For $p>2$ and $x,y \in \mathbb{R}^N$ we have the inequality,
		\begin{equation}
			\left(\left|x\right|^{p-2} x - \left|y\right|^{p-2} y \right) \cdot \left(x - y \right) \geq \frac{1}{2^{p-1}} \left| x-y \right|^p; \nonumber 
		\end{equation}
		For $1<p\leq 2$, and $x,y \in \mathbb{R}^N$, 
		\begin{equation}
			\left(1+\left|x\right|^2+\left|y\right|^2\right)^{\frac{2-p}{2}}\left(\left(\left|x\right|^{p-2} x - \left|y\right|^{p-2} y\right)\cdot\left(x-y\right)\right) \geq (p-1)\left|x-y\right|^2. \nonumber 
		\end{equation}
	\end{lemma}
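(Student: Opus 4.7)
Both inequalities are classical Simon-type monotonicity estimates for the vector field $z\mapsto |z|^{p-2}z$, and my plan is to derive them from a common line-integral identity. Fix $x,y\in\mathbb{R}^N$ and set $\xi(t)=(1-t)y+tx$ for $t\in[0,1]$. Writing
\begin{equation*}
|x|^{p-2}x-|y|^{p-2}y=\int_0^1\frac{d}{dt}\bigl(|\xi(t)|^{p-2}\xi(t)\bigr)\,dt
\end{equation*}
and dotting with $x-y$ yields the key identity
\begin{equation*}
\bigl(|x|^{p-2}x-|y|^{p-2}y\bigr)\cdot(x-y)=\int_0^1\Bigl[|\xi|^{p-2}|x-y|^2+(p-2)|\xi|^{p-4}\bigl(\xi\cdot(x-y)\bigr)^2\Bigr]dt.
\end{equation*}
The possible vanishing of $\xi$ along the segment is handled by regularizing $|z|^p$ to $(\varepsilon+|z|^2)^{p/2}$, performing the same calculation, and passing to the limit $\varepsilon\downarrow 0$ by dominated convergence.

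For part (b), where $1<p\le 2$, the second summand in the integrand is non-positive. I would apply Cauchy--Schwarz $(\xi\cdot(x-y))^2\le|\xi|^2|x-y|^2$ together with $p-2\le 0$ to obtain the pointwise bound $(p-1)|\xi|^{p-2}|x-y|^2$ for the integrand. Convexity of $|\cdot|^2$ gives $|\xi(t)|^2\le(1-t)|y|^2+t|x|^2\le 1+|x|^2+|y|^2$, and since the exponent $(p-2)/2$ is non-positive this reverses to $|\xi(t)|^{p-2}\ge(1+|x|^2+|y|^2)^{(p-2)/2}$. Integrating in $t$ and multiplying through by $(1+|x|^2+|y|^2)^{(2-p)/2}$ produces the stated inequality directly.

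For part (a), where $p>2$, both summands of the integrand are non-negative, so at least the left-hand side majorizes $|x-y|^2\int_0^1|\xi(t)|^{p-2}dt$. This bound alone is insufficient, however: the integral can be arbitrarily small relative to $|x-y|^{p-2}$ (consider $x=-y$, where $|\xi(t)|$ vanishes at $t=1/2$). My preferred remedy is to derive instead the equivalent uniform-convexity form
\begin{equation*}
\tfrac{1}{p}|x|^p\ge\tfrac{1}{p}|y|^p+|y|^{p-2}y\cdot(x-y)+\tfrac{1}{p\cdot 2^{p-1}}|x-y|^p
\end{equation*}
valid for $p\ge 2$ as a consequence of Clarkson's inequality $|\tfrac{a+b}{2}|^p+|\tfrac{a-b}{2}|^p\le\tfrac{1}{2}(|a|^p+|b|^p)$, then symmetrize by adding this estimate to the same one with $x$ and $y$ interchanged. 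The first-order Taylor terms cancel, leaving precisely the desired monotonicity with sharp constant $2^{1-p}$.

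The main obstacle is recovering the constant $1/2^{p-1}$ in part (a): the line-integral estimate degenerates exactly when $x\approx-y$, and so cannot be salvaged by elementary bounds on $\int_0^1|\xi|^{p-2}dt$. This forces the detour through the global convexity inequality above. Part (b), in contrast, falls out of the integration identity without any auxiliary machinery, since the pointwise bound $|\xi(t)|^2\le 1+|x|^2+|y|^2$ is itself built into the statement of the lemma.
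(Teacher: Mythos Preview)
The paper does not supply a proof of this lemma; it simply cites Oden's textbook, so there is no in-paper argument to compare against. Your line-integral approach for the case $1<p\le 2$ is correct and is the standard derivation.

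For $p>2$ your route through Clarkson's inequality and symmetrization has a gap in the constant. From the uniform-convexity inequality you write (with remainder $\tfrac{1}{p\cdot 2^{p-1}}|x-y|^p$), adding it to its mirror image with $x$ and $y$ swapped gives
\begin{equation*}
(|x|^{p-2}x-|y|^{p-2}y)\cdot(x-y)\ \ge\ \frac{2}{p\cdot 2^{p-1}}\,|x-y|^p\ =\ \frac{1}{p\cdot 2^{p-2}}\,|x-y|^p,
\end{equation*}
and for every $p>2$ one has $\tfrac{1}{p\cdot 2^{p-2}}<\tfrac{1}{2^{p-1}}$, so the symmetrization does \emph{not} leave ``precisely the desired monotonicity with sharp constant $2^{1-p}$'' as you claim. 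You also do not explain how the first-order uniform-convexity inequality is obtained from the midpoint Clarkson inequality; that passage is not immediate and typically introduces a further $p$-dependent loss. For the paper's purposes the specific constant is immaterial---only the qualitative monotonicity is ever used---so the defect is harmless in context, but as a proof of the lemma \emph{as stated} the argument falls short. A cleaner way to recover (and in fact improve upon) the stated constant is the direct elementary estimate $(|x|^{p-2}x-|y|^{p-2}y)\cdot(x-y)\ge 2^{2-p}|x-y|^p$, proved for instance in Lindqvist's notes on the $p$-Laplacian, which is twice as strong as what the lemma asserts.
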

	The proof of this lemma can be found in \cite{JOden}. 
	
	The existence assertion we will be providing comes via the Leray-Schauder Fixed-Point Theorem. \cite{GT}
	\begin{lemma}\label{LSThm}
		Let $T$ be a compact mapping of a Banach space $\mathbb{B}$ into itself, and suppose that there exists a constant $M$ such that,
		\begin{equation}
			\left\| x \right\|_{\mathbb{B}} \leq M
		\end{equation}
		for all $x \in \mathbb{B}$ and $\sigma \in \left[0,1\right]$ satisfying $x = \sigma T\left(x\right)$. Then $T$ has a fixed point. 
	\end{lemma}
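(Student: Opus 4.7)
The plan is to reduce Leray--Schauder to Schauder's fixed-point theorem (any compact continuous self-map of a nonempty closed bounded convex subset of a Banach space has a fixed point) by combining it with a radial retraction onto a ball whose radius strictly exceeds the a priori bound $M$.

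First I would fix any radius $R>M$ and consider the closed ball $\overline{B_R}=\{x\in\mathbb{B}:\|x\|_{\mathbb{B}}\leq R\}$, which is nonempty, closed, bounded, and convex. Define the radial retraction $\rho:\mathbb{B}\to\overline{B_R}$ by $\rho(y)=y$ when $\|y\|_{\mathbb{B}}\leq R$ and $\rho(y)=Ry/\|y\|_{\mathbb{B}}$ otherwise. This $\rho$ is continuous (in fact $1$-Lipschitz) with image contained in $\overline{B_R}$. Consequently the composition $S:=\rho\circ T$ is a continuous, compact map of $\overline{B_R}$ into itself: $T$ sends bounded sets into relatively compact sets and $\rho$ is continuous, so $S$ inherits compactness.

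Schauder's fixed-point theorem then supplies a point $x^{*}\in\overline{B_R}$ with $x^{*}=\rho(T(x^{*}))$. The remaining task is to promote this into a genuine fixed point of $T$, and the hypothesis on $\sigma$-fixed-points is exactly what does the job. Indeed, suppose for contradiction that $\|T(x^{*})\|_{\mathbb{B}}>R$. Then by definition of $\rho$,
\begin{equation}
x^{*}=\rho(T(x^{*}))=\frac{R}{\|T(x^{*})\|_{\mathbb{B}}}\,T(x^{*})=\sigma\, T(x^{*}),\qquad \sigma:=\frac{R}{\|T(x^{*})\|_{\mathbb{B}}}\in(0,1),
\end{equation}
so the hypothesis forces $\|x^{*}\|_{\mathbb{B}}\leq M$; but the same identity yields $\|x^{*}\|_{\mathbb{B}}=R>M$, a contradiction. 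Therefore $\|T(x^{*})\|_{\mathbb{B}}\leq R$, the retraction $\rho$ acts as the identity on $T(x^{*})$, and $x^{*}=T(x^{*})$.

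The main conceptual point, rather than any serious technical obstacle, is recognizing that the retraction squeezes any ``spurious'' fixed point of $S$ onto the sphere $\|x\|_{\mathbb{B}}=R$, where $x=\rho(T(x))$ necessarily rewrites as $x=\sigma T(x)$ with $\sigma\in(0,1)$, a configuration ruled out by the standing hypothesis once $R$ is chosen strictly larger than $M$. The only routine verifications are that $\rho\circ T$ is compact and that Schauder's theorem applies to $\overline{B_R}$; the strict inequality $R>M$ is what converts the a priori bound into the sought contradiction.
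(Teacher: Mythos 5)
The paper does not prove this lemma; it cites Gilbarg--Trudinger \cite{GT} and moves on. Your argument is the standard textbook proof found there (and in most functional analysis references): radially retract $T$ onto a closed ball of radius $R>M$, apply Schauder's fixed-point theorem, and observe that any spurious fixed point of $\rho\circ T$ would sit on the sphere $\|x\|=R$ while simultaneously being a $\sigma$-fixed-point of $T$ with $\sigma\in(0,1)$, contradicting the a priori bound. The argument is correct, and the choice of a \emph{strict} inequality $R>M$ is exactly what makes the contradiction clean; nothing is missing.
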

	By a compact mapping we mean that $T$ is continuous, and maps bounded sets in $\mathbb{B}$ into precompact ones. 
	
	An essential point of our proof will rely upon upgrading a weakly convergent sequence to be a strongly convergent one. This is accomplished via the following lemma. 
	\begin{lemma}[Lions-Aubin]\label{la}
		Let $X_0, X$ and $X_1$ be three Banach spaces with $X_0 \subseteq X \subseteq X_1$. Suppose that $X_0$ is compactly embedded in $X$ and that $X$ is continuously embedded in $X_1$. For $1 \leq p, q \leq \infty$, let
		\begin{equation*}
			W=\{u\in L^{p}([0,T];X_{0}): \partial_t u\in L^{q}([0,T];X_{1})\}.
		\end{equation*}
		%	{\displaystyle}
		Then:
		\begin{enumerate}
			\item[\textup{(i)}] If $p  < \infty$, then the embedding of W into $L^p([0, T]; X)$ is compact.
			\item[\textup{(ii)}] If $p  = \infty$ and $q  >  1$, then the embedding of W into $C([0, T]; X)$ is compact. 
		\end{enumerate}
	\end{lemma}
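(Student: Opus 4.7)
The plan is the classical two-step argument: first reduce compactness in $L^p([0,T];X)$ to compactness in the weaker space $L^p([0,T];X_1)$ via Ehrling's interpolation inequality, and then establish that latter compactness using the vector-valued Fréchet--Kolmogorov criterion, with equicontinuity in time supplied by the derivative bound.

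The first ingredient I would prove is Ehrling's lemma: for every $\varepsilon>0$ there exists $C_\varepsilon>0$ such that
\begin{equation*}
\|w\|_X \le \varepsilon\,\|w\|_{X_0} + C_\varepsilon\,\|w\|_{X_1} \qquad \text{for all } w\in X_0.
\end{equation*}
This follows by contradiction: negating the claim yields a sequence $w_n$ with $\|w_n\|_X=1$, $\{\|w_n\|_{X_0}\}$ bounded, and $\|w_n\|_{X_1}\to 0$. The compact embedding of $X_0$ into $X$ extracts a subsequence converging in $X$, but continuity of the embedding $X\hookrightarrow X_1$ forces the $X$-limit to coincide with the $X_1$-limit, which is $0$; this contradicts $\|w_n\|_X=1$.

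Now let $\{u_n\}\subset W$ be bounded. Applying Ehrling pointwise in $t$, raising to the $p$th power and integrating produces
\begin{equation*}
\|u_n - u_m\|_{L^p([0,T];X)} \le \varepsilon\,C + C_\varepsilon\,\|u_n-u_m\|_{L^p([0,T];X_1)},
\end{equation*}
where $C$ controls $\sup_n \|u_n\|_{L^p([0,T];X_0)}$. Since $\varepsilon$ is arbitrary, a diagonal argument reduces the problem to extracting a subsequence that is Cauchy in $L^p([0,T];X_1)$. For this I invoke the Fréchet--Kolmogorov criterion in Bochner spaces: a bounded family in $L^p([0,T];X_1)$ is precompact provided (a) for each $h>0$ the averages $\frac{1}{h}\int_t^{t+h}u_n(s)\,ds$ lie in a precompact subset of $X_1$ (supplied by the $L^p([0,T];X_0)$ bound and the compact embedding $X_0\hookrightarrow X_1$ obtained by composing the given embeddings), and (b) $\|u_n(\cdot+h)-u_n(\cdot)\|_{L^p([0,T-h];X_1)}\to 0$ uniformly in $n$ as $h\to 0$. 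Condition (b) comes from writing $u_n(t+h)-u_n(t)=\int_t^{t+h}\partial_t u_n(s)\,ds$ and applying Hölder in time with conjugate exponent $q'=q/(q-1)$:
\begin{equation*}
\|u_n(t+h)-u_n(t)\|_{X_1} \le h^{1/q'}\,\|\partial_t u_n\|_{L^q([t,t+h];X_1)},
\end{equation*}
whose $L^p_t$-norm tends to $0$ with $h$ by the $W$-bound.

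For part (ii), with $p=\infty$ and $q>1$, the same translation estimate gives uniform equicontinuity of $\{u_n\}$ as $X_1$-valued maps, while the pointwise $L^\infty([0,T];X_0)$ bound together with the compact embedding $X_0\hookrightarrow X$ gives pointwise precompactness in $X$; an Ascoli--Arzelà argument (applied after upgrading $X_1$-equicontinuity to $X$-equicontinuity through Ehrling once more) then produces a uniformly convergent subsequence in $C([0,T];X)$. The main technical obstacle is the rigorous execution of the vector-valued Fréchet--Kolmogorov criterion, and in particular the justification of the identity $u_n(t+h)-u_n(t)=\int_t^{t+h}\partial_t u_n$; this requires that $u_n$ admit an absolutely continuous representative into $X_1$, a standard but nontrivial consequence of $\partial_t u_n\in L^q([0,T];X_1)$ that underlies the whole argument.
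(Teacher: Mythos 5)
The paper does not prove this lemma; it is stated as a classical preliminary (Simon's compactness theorem, reference [S] in the bibliography), so there is no in-paper proof to compare against. Your sketch reproduces the standard Simon-type argument --- Ehrling's interpolation inequality to reduce $L^p([0,T];X)$-compactness to $L^p([0,T];X_1)$-compactness, a Bochner-space Fr\'echet--Kolmogorov criterion for part (i), and Ehrling together with Ascoli--Arzel\`a for part (ii) --- and the structure is the correct one.

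There is one concrete gap. Part (i) is stated for all $1\le q\le\infty$, but your H\"older estimate $\|u_n(t+h)-u_n(t)\|_{X_1}\le h^{1/q'}\|\partial_t u_n\|_{L^q([t,t+h];X_1)}$ has $1/q'=1-1/q$, which vanishes at $q=1$: the factor $h^{1/q'}$ degenerates to $1$, and you do not obtain $\|u_n(\cdot+h)-u_n\|_{L^p([0,T-h];X_1)}\to 0$ uniformly in $n$. An $L^1(X_1)$ bound on $\partial_t u_n$ is genuinely not enough to make translations uniformly small in $L^p(X_1)$ when $p>1$. Simon circumvents this by noting, via Fubini, that $\|u_n(\cdot+h)-u_n\|_{L^1([0,T-h];X_1)}\le h\,\|\partial_t u_n\|_{L^1([0,T];X_1)}$ --- so translations are uniformly small in the weaker $L^1(X_1)$ norm --- and then proving that this $L^1$-smallness together with the $L^p([0,T];X_0)$ bound already yields relative compactness in $L^p([0,T];X)$ (his Theorem 3). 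As written, your argument covers only $q>1$; you should either restrict to that case or switch to the $L^1$-translation criterion. A smaller point for part (ii): the $L^\infty([0,T];X_0)$ bound gives only essential boundedness of $\|u_n(t)\|_{X_0}$, so before invoking the compact embedding pointwise for Ascoli--Arzel\`a you should use the continuity of $u_n$ into $X_1$ (from the absolutely continuous representative you mention) to upgrade the a.e.\ bound to one holding at every $t$.
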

	
	\section{Approximate Problems}
	
	In this section we study a system of approximate problems. To describe our approximate problems we first let $\tau \in \left(0,1\right)$. Then, to approximate $\nabla_z E\left(z\right)$, we define, 
	\begin{equation}
		\ft\left(s\right) = \left(s+\tau\right)^{\frac{p-2}{2}} + \bz\left(s+\tau\right)^{-\frac{1}{2}}. \label{ftdefn}
	\end{equation}
	Then we form the system of equations,
	\begin{align}
		-\mdiv\left(\left[M\left(\nabla u \right) + \tau I\right]\nabla v \right) + \tau v &= \frac{w-u}{\tau}, \ \ \mbox{ in $\Omega$, } \label{ap1} \\
		- \mdiv\left(\ft\left(\left|\nabla u \right|^2\right)\nabla u\right)  + \tau u &= v, \ \ \mbox{ in $\Omega$,} \label{ap2} \\
		\nabla u \cdot \nu = \nabla v \cdot \nu &= 0, \ \ \mbox{ on $\partial\Omega$ }, \label{ap3} 
	\end{align}
	where $w$ is a given function.
	
	To form our approximate problems, we first discretized in the time direction. Then we added in the terms $-\tau\Delta v$ and $\tau v$ to equation \eqref{ap1} to make the principle operator uniformly elliptic, and due to the boundary conditions. For similar reasons we also added the lower-order term $\tau u$ to equation \eqref{ap2}. 
	
	Our goal in this section is to detail the existence of solutions to \eqref{ap1}-\eqref{ap3}. We first give our definition of a weak solution. 
	\begin{defn} Assume that
		\begin{equation}\label{ws}
			w\in L^2(\Omega).
		\end{equation}
		We say that a pair of functions $\left(u,v\right)$ is a weak solution to \eqref{ap1}-\eqref{ap3} if the following holds;
		
		(D1) We have $u \in W^{1,p}\left(\Omega\right)$, and $v \in \wot$, 
		
		(D2) $u$ and $v$ solve the integral equations,
		\begin{align}
			&\int_{\Omega} \left[M\left(\nabla u \right) + \tau I\right] \nabla v \cdot \nabla \varphi dx + \tau \int_{\Omega} v \varphi dx = \int_{\Omega} \left(\frac{w-u}{\tau} \right)\varphi dx \nonumber \\
			&\int_{\Omega} \ft\left(\left|\nabla u \right|^2\right)\nabla u \cdot \nabla \psi dx + \tau \int_{\Omega} u \psi dx = \int_{\Omega} v \psi dx \nonumber 
			& \ \ \mbox{ for all $\left(\varphi,\psi\right) \in \left(\wot\right)^2$. } \nonumber 
		\end{align}
	\end{defn}

	We then have the following theorem for the existence of weak solutions. 
	
	\begin{thm}
		Suppose that $w \in L^2\left(\Omega\right)$ is a given function, and $\tau > 0$. Then there is a pair of functions $\left(u, v \right) \in \wot$ which are a weak solution to \eqref{ap1}-\eqref{ap3}. 
	\end{thm}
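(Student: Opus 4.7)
The plan is to apply the Leray-Schauder fixed point theorem (Lemma \ref{LSThm}) on the Banach space $\mathbb{B} = L^2(\Omega)$. I will construct a compact map $T: L^2(\Omega) \to L^2(\Omega)$ whose fixed points correspond to weak solutions of \eqref{ap1}-\eqref{ap3}, and then derive a uniform a priori bound on all solutions of $v = \sigma T(v)$ with $\sigma \in [0,1]$.

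Given $\bar v \in L^2(\Omega)$, define $T(\bar v) = v$ in two steps. First, solve the nonlinear elliptic problem
\begin{equation*}
-\mdiv\!\left(\ft(|\nabla u|^2)\nabla u\right) + \tau u = \bar v \ \mbox{ in $\Omega$}, \qquad \nabla u \cdot \nu = 0 \ \mbox{ on $\partial\Omega$}.
\end{equation*}
The associated operator is the gradient of the strictly convex, coercive functional $u \mapsto \int_\Omega \bigl[\tfrac{1}{p}(|\nabla u|^2+\tau)^{p/2} + \bz(|\nabla u|^2+\tau)^{1/2} + \tfrac{\tau}{2}u^2 - \bar v u\bigr]dx$ on $W^{1,p}(\Omega)$, which yields a unique $u \in W^{1,p}(\Omega)$ by the direct method (equivalently, by a Browder-Minty monotone operator argument). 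Second, with this $u$ the coefficient $M(\nabla u)$ is bounded pointwise by a universal constant, so the linear problem
\begin{equation*}
-\mdiv\!\left(\left[M(\nabla u) + \tau I\right]\nabla v\right) + \tau v = \frac{w-u}{\tau} \ \mbox{ in $\Omega$}, \qquad \nabla v \cdot \nu = 0 \ \mbox{ on $\partial\Omega$}
\end{equation*}
has a unique solution $v \in W^{1,2}(\Omega)$ by Lax-Milgram; set $T(\bar v) = v$.

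For compactness, if $\{\bar v_n\}$ is bounded in $L^2$, testing the first equation with $u_n$ bounds $\|u_n\|_{W^{1,p}(\Omega)}$, hence $\|(w-u_n)/\tau\|_{L^2}$, and the Lax-Milgram estimate then bounds $\{v_n\}$ in $W^{1,2}(\Omega)$, which embeds compactly into $L^2(\Omega)$ by Rellich-Kondrachov. For continuity, if $\bar v_n \to \bar v$ in $L^2$ then the difference of the first equations, tested with $u_n - u$, combined with Lemma \ref{odenone} and the strict monotonicity coming from the $\tau u$ term and the convex structure of the $\ft$-operator, forces $\nabla u_n \to \nabla u$ in $L^p(\Omega)$. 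Dominated convergence then gives $M(\nabla u_n) \to M(\nabla u)$ in every $L^q(\Omega)$ with $q < \infty$, and continuous dependence for the linear second equation produces $v_n \to v$ in $W^{1,2}(\Omega)$, hence in $L^2(\Omega)$.

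The crux of the argument is the $\sigma$-uniform a priori bound. If $v = \sigma T(v)$ with $\sigma \in (0,1]$, set $\tilde v = v/\sigma$; then $(u,\tilde v)$ satisfies the first step equation with right-hand side $v$, while the second step equation (after rescaling by $\sigma$) becomes $-\mdiv([M(\nabla u)+\tau I]\nabla v) + \tau v = \sigma(w-u)/\tau$. Testing the first with $u$ gives the identity $\int_\Omega uv\, dx = \int_\Omega \ft(|\nabla u|^2)|\nabla u|^2\, dx + \tau\|u\|_2^2$; substituting this into the second tested with $v$ produces
\begin{equation*}
\int_\Omega \left[M(\nabla u)+\tau I\right]\nabla v \cdot \nabla v\, dx + \tau\|v\|_2^2 + \frac{\sigma}{\tau}\int_\Omega \ft(|\nabla u|^2)|\nabla u|^2\, dx + \sigma\|u\|_2^2 = \frac{\sigma}{\tau}\int_\Omega wv\, dx.
\end{equation*}
Since $\sigma \leq 1$, Young's inequality bounds the right-hand side by $\tfrac{\tau}{2}\|v\|_2^2 + \tfrac{1}{2\tau^3}\|w\|_2^2$, and absorbing the first term into the manifestly nonnegative left-hand side delivers $\|v\|_{L^2(\Omega)} \leq \tau^{-2}\|w\|_{L^2(\Omega)}$, independently of $\sigma$. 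With all three hypotheses of Lemma \ref{LSThm} verified, the theorem follows. The genuinely nontrivial step is identifying this pairing: the naive energy estimate on the second equation alone leaves a $\|u\|_2$ term on the right that only the identity derived from the first equation can absorb.
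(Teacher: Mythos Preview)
Your proposal is correct and follows essentially the same route as the paper: you apply Leray--Schauder on $L^2(\Omega)$ with the same two-step solution map, establish continuity via the monotonicity inequalities of Lemma~\ref{odenone} to get strong convergence of $\nabla u_n$, and obtain the $\sigma$-uniform bound by testing \eqref{ap1} with $v$ and substituting the identity $\int uv = \int F_\tau(|\nabla u|^2)|\nabla u|^2 + \tau\|u\|_2^2$ from \eqref{ap2} to absorb the $u$-term. This is precisely the paper's argument in \eqref{lsf4}--\eqref{lsf8}.
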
 
	
	\begin{proof}
		For the proof we make use of the Leray-Schauder Theorem \ref{LSThm}. To do this, we define a mapping $\mathbb{B}$ from $L^2\left(\Omega\right)$ into itself in the following manner; given $\psi \in L^2\left(\Omega\right)$, we first define $u$ to be the unique weak solution to the problem, 
		\begin{align}
			-\mdiv\left(\ft\left(\left|\nabla u \right|^2\right)\nabla u\right)  + \tau u &= \psi, \ \ \mbox{ in $\Omega$, } \label{ap4} \\
			\nabla u \cdot \nu &= 0, \ \ \mbox{ on $\partial\Omega$. } \label{ap5} 
		\end{align}
		Note that $\ft$ is monotone, i.e. 
		\begin{equation}
			\left( \ft\left(\left|\xi\right|^2\right)\xi - \ft\left(\left|\eta\right|^2\right)\eta \right) \cdot \left(\xi - \eta \right) \geq 0, \ \ \mbox{ for all $\xi, \eta \in \mathbb{R}^2$. }
		\end{equation}
		This
		%, and also the fact that $\tau u$ is a monotone function of $u$ 
		implies the existence of a unique weak solution $u \in W^{1,p}\left(\Omega\right)\cap L^2\left(\Omega\right)$ to the problem, \eqref{ap4}-\eqref{ap5}. \cite{JOden} Using this $u$, we then form the problem, 
		\begin{align}
			-\mdiv\left(\left[M\left(\nabla u \right) + \tau I\right]\nabla v \right) + \tau v &= \frac{w-u}{\tau}, \ \ \mbox{ in $\Omega$, } \label{ap6} \\
			\nabla v \cdot \nu &= 0, \ \ \mbox{ on $\partial \Omega$. } \label{ap7} 
		\end{align}
		Since $M\left(\nabla u \right) + \tau I$ is a uniformly elliptic coefficient matrix, we can then also conclude from the classical theory for elliptic equations \cite{GT} that there is a unique weak solution $v \in \wot \cap C^{\alpha}\left(\overline{\Omega}\right) $ to the problem \eqref{ap6}-\eqref{ap7} due to \eqref{ws} and the fact that $N=2$. We finally define our mapping as $\mathbb{B}\left(\psi\right) = v$. As the solutions to the problems \eqref{ap4}-\eqref{ap5} and \eqref{ap6}-\eqref{ap7} are unique, we have that $\mathbb{B}$ is well-defined. 
		
		Next, the Sobolev Embedding Theorem (see \cite{GT} chap. 7) asserts that, $\wot$ is compactly embedded in $L^2\left(\Omega\right)$. From this we can conclude that our mapping $\mathbf{B}$ takes bounded sets in $L^2\left(\Omega\right)$ into precompact ones. 
		
		We will now move on to showing that $\mathbf{B}$ is continuous on $L^2\left(\Omega\right)$. To do so we first let $\{\psi_n\}$ be a sequence in $L^2\left(\Omega\right)$, and $\psi \in L^2\left(\Omega\right)$ so that, 
		\begin{equation}
			\psi_n \rightarrow \psi \ \ \mbox{ strongly in $L^2\left(\Omega\right)$. } \label{c1}
		\end{equation}
		
		Then, for each $n=1,2,3,...$ we set $v_n = \mathbf{B}\left(\psi_n\right)$. i.e. 
		\begin{align}
			-\mdiv\left(\left[M\left(\nabla u_n \right) + \tau I \right] \nabla v_n \right) + \tau v_n &= \frac{w-u_n}{\tau}, \ \ \mbox{ in $\Omega$, } \label{c2} \\
			-\mdiv\left(\ft\left(\left|\nabla u_n \right|^2\right)\nabla u_n\right)   + \tau u_n & = \psi_n, \ \ \mbox{ in $\Omega$, } \label{c3} \\
			\nabla u_n \cdot \nu = \nabla v_n \cdot \nu &= 0, \ \ \mbox{ on $\partial\Omega$. } \label{c4} 
		\end{align}
		
		We now use $u_n$ as a test function in equation \eqref{c3} to derive, 
		\begin{equation}
			\int_{\Omega} \ft\left(\left|\nabla u_n\right|^2\right) \left|\nabla u_n \right|^2 dx + \int_{\Omega} u_n^2 dx \leq c(\tau) \int_{\Omega} \psi_n^2 dx. \label{c5}
		\end{equation}
		
		As a consequence we may conclude that the sequence $\{u_n\}$ is uniformly bounded in $W^{1,p}\left(\Omega\right)\cap L^2\left(\Omega\right)$. Next, we use $v_n$ as a test function in \eqref{c2} to find, 
		\begin{equation}
			\int_{\Omega} M\left(\nabla u_n \right) \nabla v_n \cdot \nabla v_n dx + \tau \int_{\Omega} \left|\nabla v_n \right|^2 dx + \tau \int_{\Omega} v_n^2 dx = \int_{\Omega} \left(\frac{w-u_n}{\tau} \right) v_n dx. \label{c6}
		\end{equation}
		For the first integral on the left-hand side of \eqref{c6} we then have, 
		\begin{equation}
			\int_{\Omega} M\left(\nabla u_n \right) \nabla v_n \cdot \nabla v_n dx \geq \int_{\Omega} \frac{\left|\nabla v_n \right|^2 }{ 1+ q \left|\nabla u_n \right| } dx \geq 0. \label{c7}
		\end{equation}
		Then, for the integral on the right-hand side of \eqref{c6} we use H\"older's inequality to get,
		\begin{equation}
			\int_{\Omega} \left(\frac{w-u_n}{\tau} \right) v_n dx \leq \varepsilon \int_{\Omega} v_n^2 dx + c\left(\varepsilon\right) \int_{\Omega} \left| \frac{w-u_n}{\tau} \right|^2 dx. \label{c8}
		\end{equation}
		Upon using \eqref{ws}, \eqref{c7}, and \eqref{c8} in \eqref{c6}, and keeping in mind \eqref{c5} we then obtain, 
		\begin{equation}
			\int_{\Omega} \left|\nabla v_n \right|^2 dx + \int_{\Omega} v_n^2 dx \leq c(\tau) \label{c9} 
		\end{equation}
		With this we can conclude that the sequence $\{v_n\}$ is uniformly bounded in $\wot$. As such, there is a subsequence, which we won't relabel, so that, 
		\begin{align}
			u_n \rightarrow u& \ \ \mbox{ weakly in $W^{1,p}\left(\Omega\right)$, and weakly in $L^2\left(\Omega\right)$ } \label{c10} \\
			v_n \rightarrow v& \ \ \mbox{ weakly in $\wot$. } \label{c11}
		\end{align}
	Now from \eqref{c3} we derive,
		\begin{align}
			-\mdiv\left( \ft\left(\left|\nabla u_{n_1} \right|^2 \right)\nabla u_{n_1} - \ft\left(\left|\nabla u_{n_2} \right|^2 \right)\nabla u_{n_2} \right) + \tau\left( u_{n_1} - u_{n_2} \right) = \psi_{n_1} - \psi_{n_2}, \label{c12}
		\end{align}
	We use the function $\left(u_{n_1} - u_{n_2} \right)$ as a test function in $\eqref{c12}$ to find, 
		\begin{align}
			\int_{\Omega}& \left(\ft\left(\left|\nabla u_{n_1}\right|^2 \right) \nabla u_{n_1} - \ft\left(\left|\nabla u_{n_2} \right|^2 \right)\nabla u_{n_2} \right) \cdot \left( \nabla u_{n_1} - \nabla u_{n_2} \right) dx \nonumber \\
			& + \tau \int_{\Omega} \left( u_{n_1} - u_{n_2} \right)^2 dx = \int_{\Omega} \left(\psi_{n_1} - \psi_{n_2} \right) \left( u_{n_1} - u_{n_2} \right) dx \label{c13}
		\end{align}
		Then using \ref{inequalities} we have two cases. For $p > 2 $, we have,
		\begin{align}
			\int_{\Omega}& \left(\ft\left(\left|\nabla u_{n_1}\right|^2 \right) \nabla u_{n_1} - \ft\left(\left|\nabla u_{n_2} \right|^2 \right)\nabla u_{n_2} \right) \cdot \left( \nabla u_{n_1} - \nabla u_{n_2} \right) dx \geq \frac{1}{2^{p-1}} \int_{\Omega} \left| \nabla u_{n_1} - \nabla u_{n_2} \right|^p dx \label{c14} 
		\end{align}
		Then, for $1 < p \leq 2$ we use the second inequality in \ref{odenone} to estimate;
		\begin{align}
			\int_{\Omega} \left|\nabla u_{n_1} - \nabla u_{n_2} \right|^p dx &= \int_{\Omega} \left( 1 + \left|\nabla u_{n_1} \right|^2 + \left|\nabla u_{n_2} \right|^2 \right)^{\frac{p(2-p)}{4}} \frac{ \left|\nabla u_{n_1} - \nabla u_{n_2} \right|^p }{\left( 1 + \left|\nabla u_{n_1} \right|^2 + \left|\nabla u_{n_2} \right|^2 \right)^{\frac{p(2-p)}{4}} } dx \nonumber \\
			& \leq \left( \int_{\Omega} \left(1+ \left|\nabla u_{n_1}\right|^2 + \left|\nabla u_{n_2} \right|^2 \right)^{\frac{p-2}{2}} \left|\nabla u_{n_1} - \nabla u_{n_2} \right|^2 dx \right)^{\frac{p}{2}} \nonumber \\
			& \cdot \left( \int_{\Omega} \left( 1+ \left|\nabla u_{n_1}\right|^2 +\left| \nabla u_{n_2} \right|^2 \right)^{\frac{p}{2}} dx \right)^{\frac{2-p}{2} } \nonumber \\
			& \leq c \left[ \int_{\Omega} \left( \ft\left(\left|\nabla u_{n_1} \right|^2 \right) \nabla u_{n_1} - \ft\left(\left|\nabla u_{n_2} \right|^2 \right) \nabla u_{n_2} \right) \cdot \left( \nabla u_{n_1} - \nabla u_{n_2} \right) dx \right]^{\frac{p}{2}}. \label{c15}
		\end{align}

		As the sequence $\psi_n$ converges strongly in $L^2\left(\Omega\right)$ and $u_n$ converges weakly in $L^2\left(\Omega\right)$, we deduce that, 
		\begin{equation}
			\int_{\Omega} \left(\psi_n - \psi \right) \left( u_n - u \right) dx \rightarrow 0. \nonumber 
		\end{equation}
		Using this along with \eqref{c14} and \eqref{c15} in \eqref{c13} we then derive for all $1< p < \infty$,
		\begin{equation}
			\int_{\Omega} \left|\nabla u_{n_1} - \nabla u_{n_2} \right|^p dx \rightarrow 0, \ \ \mbox{ as $n_1, n_2 \rightarrow \infty$. } \label{c16}
		\end{equation}
		Consequently we obtain, at least for a subsequence, 
		\begin{align}
			u_n \rightarrow &u, \ \ \mbox{ strongly in $\wot$, } \nonumber \\
			u_n \rightarrow &u, \ \ \mbox{ pointwise a.e. in $\Omega$, } \nonumber \\
			\nabla u_n \rightarrow &\nabla u, \ \ \mbox{ pointwise a.e. in $\Omega$, } \nonumber \\
			M\left(\nabla u_n \right) \rightarrow &M\left( \nabla u \right), \ \ \mbox{strongly in   $L^p(\Omega)$ for each $p>1$. } \nonumber 
		\end{align}
		The last convergence can easily be verified from our definition of $M(\nabla u)
		$ in the introduction. From the last of the these, we can then use the weak convergence of $\nabla v_n$ along with the strong convergence of $M\left(\nabla u_n \right)$, to conclude that, 
		\begin{equation}
			M\left(\nabla u_n \right) \nabla v_n \rightarrow M\left(\nabla u \right) \nabla v, \ \ \mbox{ weakly in $L^2\left(\Omega\right)$. } \nonumber 
		\end{equation}
		We are then able to pass to the limit in \eqref{c2} to find, 
		\begin{align}
			-\mdiv\left(\left[M\left(\nabla u \right) + \tau I \right] \nabla v \right) + \tau v &= \frac{w-u}{\tau}, \ \ \mbox{ in $\Omega$, } \label{c17} \\
			\nabla v \cdot \nu &= 0, \ \ \mbox{ on $\partial\Omega$. } \label{c18} 
		\end{align}
		We then subtract \eqref{c17} from \eqref{c2}, 
		\begin{equation}
			-\mdiv\left( M\left(\nabla u_n\right)\nabla v_n - M\left(\nabla u\right)\nabla v \right) - \tau \Delta \left(v_n -v \right) + \tau \left(v_n - v\right) = \frac{u-u_n}{\tau}, \ \ \mbox{ in $\Omega$. } \label{c19} 
		\end{equation}
		We then use $\left(v_n - v \right)$ as a test function in $\eqref{c19}$ and obtain, 
		\begin{align}
			&\int_{\Omega} \left( M\left(\nabla u_n\right)\nabla v_n - M\left(\nabla u\right)\nabla v \right) \cdot \nabla \left(v_n - v \right) dx + \tau \int_{\Omega} \left|\nabla \left( v_n - v \right) \right|^2 dx \nonumber \\
			& + \tau \int_{\Omega} \left( v_n - v \right)^2 dx = \int_{\Omega} \left( \frac{u-u_n}{\tau} \right) \left(v_n - v \right) dx. \label{c20}
		\end{align}
		We then have, 
		\begin{align}
			\int_{\Omega} \left( M\left(\nabla u_n\right)\nabla v_n - M\left(\nabla u\right)\nabla v \right) \cdot \nabla \left(v_n - v \right) dx &= \int_{\Omega}  M\left(\nabla u_n\right) \nabla (v_n-v)\cdot \nabla (v_n - v) dx \nonumber \\
			&+ \int_{\Omega} \left(M\left(\nabla u_n\right)-M\left(\nabla u \right)\right) \nabla v \cdot \nabla \left(v_n - v \right) dx \nonumber \\
			& \geq \int_{\Omega} \left(M\left(\nabla u_n\right)-M\left(\nabla u \right)\right) \nabla v \cdot \nabla \left(v_n - v \right) dx \nonumber \\
			& \rightarrow 0. 
		\end{align}
		Then, for the right-hand side we use the strong convergence of $\{u_n\}$ and the weak convergence of $v_n$ to obtain, 
		\begin{equation}
			\int_{\Omega} \left( \frac{u-u_n}{\tau} \right) \left(v_n - v \right) dx \rightarrow 0. \nonumber 
		\end{equation}
		Consequently we can conclude that $\{v_n\}$ converges strongly in $\wot$ to $v$. 
		
		Thus we have shown that every subsequence of $\{\mathbf{B}\left(\psi_n\right)\}$ has a further convergent subsequence, all of which converge to the same limit $\mathbf{B}\left(\psi\right)$. Therefore we can conclude that the whole sequence must converge to $\mathbf{B}\left(\psi\right)$. That is to say, $\mathbf{B}$ is a continuous mapping on $L^2\left(\Omega\right)$. 
		
		There is one final condition left to check in order to verify all of the hypothesis of the Leray-Schauder Theorem. \ref{LSThm} Suppose that $\sigma \in \left(0,1\right)$ and that $v\in L^2\left(\Omega\right)$ satisfies, $v= \sigma \mathbf{B}\left(v\right)$. This equation is equivalent to the system, 
		\begin{align}
			-\mdiv\left(\left[M\left(\nabla u \right) + \tau I \right] \nabla v \right) + \tau v &= \sigma \left(\frac{w-u}{\tau} \right), \ \ \mbox{ in $\Omega$, } \label{lsf1} \\
			-\mdiv\left(\ft\left(\left|\nabla u\right|^2\right)\nabla u\right) + \tau u &= v, \ \ \mbox{ in $\Omega$, } \label{lsf2} \\
			\nabla u \cdot \nu = \nabla v \cdot \nu &= 0, \ \ \mbox{ on $\partial\Omega$. } \label{lsf3}
		\end{align}
		We then use $v$ as a test function in \eqref{lsf1} to derive, 
		\begin{equation}
			\int_{\Omega} \left[M\left(\nabla u \right) + \tau I\right] \nabla v \cdot \nabla v dx + \tau \int_{\Omega} v^2 dx  + \frac{\sigma}{\tau} \int_{\Omega} u v dx = \frac{\sigma}{\tau} \int_{\Omega} w v dx. \label{lsf4} 
		\end{equation}
		For the first integral on the left-hand side we have, 
		\begin{equation}
			\int_{\Omega} \left[M\left(\nabla u \right) + \tau I\right] \nabla v \cdot \nabla v dx \geq \int_{\Omega} \frac{\left|\nabla v \right|^2 }{1+q\left|\nabla u \right|} dx + \tau \int_{\Omega} \left|\nabla v \right|^2 dx. \label{lsf5} 
		\end{equation}
		Next, for the final integral on the left-hand side of \eqref{lsf4} we use \eqref{lsf2} to calculate,
		\begin{align}
			\int_{\Omega} u v dx &= - \int_{\Omega} u \mdiv\left( \ft\left(\left|\nabla u \right|^2\right)\nabla u \right) dx + \tau \int_{\Omega} u^2 dx \nonumber \\
			& = \int_{\Omega} \ft\left(\left|\nabla u\right|^2\right)\left|\nabla u \right|^2 dx + \tau \int_{\Omega} u^2 dx \nonumber \\
			& \geq 0. \label{lsf6}
		\end{align}
		We then use Young's inequality on the right-hand side of \eqref{lsf4} to estimate, 
		\begin{equation}
			\frac{\sigma}{\tau} \int_{\Omega} w v dx \leq \varepsilon \int_{\Omega} v^2 dx + \frac{c(\varepsilon)}{\tau^2} \int_{\Omega} w^2 dx. \label{lsf7}
		\end{equation}
		We use \eqref{lsf5}-\eqref{lsf7} in \eqref{lsf4}, and choose $\varepsilon$ to be sufficiently small then yields, 
		\begin{equation}
			\int_{\Omega} \left|\nabla v \right|^2 dx + \int_{\Omega} v^2 dx \leq c\left(\tau\right) \int_{\Omega} w^2 dx. \label{lsf8}
		\end{equation}
		With \eqref{lsf8} we have the final piece required for the Leray-Schauder Fixed-point Theorem. Obviously such a fixed point is a weak solution to \eqref{ap1}-\eqref{ap3}. The proof is complete. 
	\end{proof}
	
	\section{Proof of the Main Theorem}
	
	In this section we give our proof of the main theorem. We first present a time discretized problem, which is based on \eqref{ap1}-\eqref{ap3}. We then derive a-priori estimates for our approximate solutions. In the final step we then prove some compactness results that allow us to justify passing to the limit. 
	
	We begin by describing the time descretized problem. We first let $T>0$ be given. Then for each $j = 1,2,3,...$ we divide the time interval $\left[0,T\right]$ into $j$ sub-intervals of equal size. Next set, 
	\begin{equation}
		\tau = \frac{T}{j}, \ \ \mbox{ and } \ \ t_k = k\tau, \ k=0,1,2,...,j. \nonumber 
	\end{equation}
	Then for each $k$, we recursively solve the system, 
	\begin{align}
		\frac{u_k - u_{k-1} }{\tau } - &\mdiv\left(\left[M\left(\nabla u_k \right) + \tau I \right] \nabla v_k \right) + \tau v_k = 0, \ \ \mbox{ in $\Omega$, }  \label{td1} \\
		- & \mdiv\left(\ft\left(\left|\nabla u_k \right|^2 \right) \nabla u_k \right) + \tau u_k = v_k, \ \ \mbox{ in $\Omega$, } \label{td2} \\
		&\nabla u_k \cdot \nu = \nabla v_k \cdot \nu = 0, \ \ \mbox{ on $\partial\Omega$. } \label{td3}
	\end{align}
	
	Now we introduce the functions, 
	\begin{align}
		\tuj &= \left(\frac{t-t_{k-1}}{\tau}\right) u_k\left(x\right) - \left(1 - \frac{t-t_{k-1}}{\tau} \right) u_{k-1}, \ \ \mbox{ on $(t_{k-1},t_k]\times \Omega$, } \label{tujdefn} \\
		\buj &= u_k, \ \ \mbox{ on $(t_{k-1},t_k]\times \Omega$, } \label{bujdefn} \\
	%	\tvj &= \left(\frac{t-t_{k-1}}{\tau}\right) v_k\left(x\right) - \left(1 - \frac{t-t_{k-1}}{\tau} \right) v_{k-1}, \ \ \mbox{ on $(t_{k-1},t_k]\times \Omega$, } \label{tvjdefn} \\
		\bvj &= u_k, \ \ \mbox{ on $(t_{k-1},t_k]\times \Omega$, } \label{bvjdefn} 
	\end{align}
	
	Using these functions we may express our time-discrete problems \eqref{td1}-\eqref{td3} as, 
	\begin{align}
		\partial_t \tuj &- \mdiv\left( \left[ M\left(\nabla \buj \right) +\tau I\right] \nabla \bvj \right) + \tau \bvj = 0, \ \ \mbox{ on $\ot$, } \label{td4} \\
		&-\mdiv\left(\ft\left(\left|\nabla \buj \right|^2 \right)\nabla \buj \right) + \tau \buj = \bvj, \ \ \mbox{ on $\ot$, } \label{td5} 
	\end{align}
	
	Our next goal is to the discrete analogue of our a priori estimate \eqref{I9}. This is our basic starting point for being able to justify passing to the limit in \eqref{td4}-\eqref{td5}. 
	\begin{prop}\label{apdiscrete}
		There exists a constant $c$ which is independent of $\tau$ and depends only on the given data so that,  
		\begin{align}
			\max_{0\leq t \leq T} &\left( \int_{\Omega} \left( \left|\nabla \buj \right|^p + \left|\nabla \buj\right| \right) dx + \tau \int_{\Omega} \left|\buj \right|^2 dx \right) + \int_{\ot} \frac{ \left|\nabla \bvj \right|^2 }{1 + q \left|\nabla \buj \right| } dxdt \nonumber  \\
			& + \tau \int_{\ot} \left|\nabla \bvj \right|^2 dx + \tau \int_{\ot} \left|\bvj \right|^2 dxdt \nonumber \\
			& \leq c \left( \int_{\Omega} \left(\left|\nabla u_0 \right|^p + \left|\nabla u_0 \right| \right) dx + \tau \int_{\Omega} \left|u_0\right|^2 dx \right) \label{apriori1}
		\end{align}
	\end{prop}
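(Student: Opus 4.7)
The plan is to mimic, at the discrete level, the formal derivation of \eqref{I9} already sketched in the introduction: use $v_k$ as a test function in \eqref{td1} and exploit \eqref{td2} to convert the resulting time difference $\int_\Omega (u_k-u_{k-1})v_k\,dx$ into a genuine discrete time difference of a convex energy. Concretely, multiplying \eqref{td1} by $\tau v_k$, integrating over $\Omega$, and then substituting $v_k=-\mdiv(\ft(|\nabla u_k|^2)\nabla u_k)+\tau u_k$ from \eqref{td2} (followed by an integration by parts in the first term) yields
\begin{equation*}
\int_\Omega \ft(|\nabla u_k|^2)\nabla u_k\cdot(\nabla u_k-\nabla u_{k-1})\,dx + \tau\!\int_\Omega u_k(u_k-u_{k-1})\,dx + \tau\!\int_\Omega[M(\nabla u_k)+\tau I]\nabla v_k\cdot\nabla v_k\,dx + \tau^2\!\int_\Omega v_k^2\,dx = 0.
\end{equation*}

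The heart of the proof is then to introduce the potential
\begin{equation*}
E_\tau(\xi) = \frac{1}{p}\bigl[(|\xi|^2+\tau)^{p/2}-\tau^{p/2}\bigr] + \bz\bigl[(|\xi|^2+\tau)^{1/2}-\tau^{1/2}\bigr],
\end{equation*}
which by construction satisfies $\nabla_\xi E_\tau(\xi)=\ft(|\xi|^2)\xi$. A direct computation gives that the Hessian of $\xi\mapsto(|\xi|^2+\tau)^{r/2}$ is a positive multiple of $(|\xi|^2+\tau)I+(r-2)\,\xi\otimes\xi$, whose smallest eigenvalue equals $(r-1)|\xi|^2+\tau>0$ for $r>1$; hence $E_\tau$ is convex for every $p>1$, and
\begin{equation*}
\ft(|\nabla u_k|^2)\nabla u_k\cdot(\nabla u_k-\nabla u_{k-1}) \ge E_\tau(\nabla u_k) - E_\tau(\nabla u_{k-1}),
\end{equation*}
while the elementary inequality $a(a-b)\ge\tfrac12(a^2-b^2)$ handles the $u_k$ term. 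Combining these with the ellipticity bound \eqref{mc} and summing over $k=1,\dots,K$ for an arbitrary $K\le j$ gives
\begin{equation*}
\int_\Omega E_\tau(\nabla u_K)\,dx + \frac{\tau}{2}\!\int_\Omega u_K^2\,dx + \sum_{k=1}^{K}\tau\!\int_\Omega\frac{|\nabla v_k|^2}{1+q|\nabla u_k|}\,dx + \sum_{k=1}^{K}\tau^2\!\int_\Omega\bigl(|\nabla v_k|^2+v_k^2\bigr)dx \le \int_\Omega E_\tau(\nabla u_0)\,dx + \frac{\tau}{2}\!\int_\Omega u_0^2\,dx.
\end{equation*}

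To finish, I compare $E_\tau$ with the norms appearing in \eqref{apriori1}. Separating the regimes $|\xi|^2\ge\tau$ and $|\xi|^2\le\tau$ and using subadditivity of $t\mapsto t^{r/2}$ for $r\le1$ together with convexity for $r\ge1$ produces a two-sided bound $c_1(|\xi|^p+|\xi|)-c_2\tau^{1/2}\le E_\tau(\xi)\le c_3(|\xi|^p+|\xi|)+c_4\tau^{1/2}$, so that (since $\tau\le1$) the discrete estimate above rearranges into \eqref{apriori1} after taking the maximum over $K$ and re-expressing the sums as integrals over $\ot$ via the definitions of $\buj$ and $\bvj$. The main obstacle I anticipate is the convexity step in the singular range $1<p<2$, where the Hessian of $E_\tau$ degenerates at $\xi=0$ but remains positive semidefinite thanks to the regularization by $\tau>0$; once this monotonicity-based energy identity is secured, the remainder is a bookkeeping exercise that reproduces, term by term, the formal calculation \eqref{I6}--\eqref{I9}, with the extra regularizing contributions $\tau I$ and $\tau u_k$ automatically yielding the $\tau$-weighted quantities on the left-hand side of \eqref{apriori1}.
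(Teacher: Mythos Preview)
Your proposal is correct and follows essentially the same route as the paper: test \eqref{td1} with $v_k$, substitute \eqref{td2} for $v_k$, and use convexity of the regularized energy to turn $\ft(|\nabla u_k|^2)\nabla u_k\cdot(\nabla u_k-\nabla u_{k-1})$ into a telescoping difference, then sum in $k$. The only difference is presentational: the paper simply invokes Lemma~\ref{inequalities} for the convexity step, whereas you introduce the potential $E_\tau$ explicitly and verify positive semidefiniteness of its Hessian; your final two-sided comparison of $E_\tau$ with $|\xi|^p+|\xi|$ (up to $O(\tau^{1/2})$) makes explicit a detail the paper leaves implicit.
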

	\begin{proof}
		For the proof we use $v_k$ as a test function in \eqref{td1}. 
		\begin{equation}
			\frac{1}{\tau} \int_{\Omega} \left(u_k - u_{k-1}\right) v_k dx + \int_{\Omega} \left[M\left(\nabla u_k\right) + \tau I \right] \nabla v_k \cdot \nabla v_k dx + \tau \int_{\Omega} v_k^2 dx = 0. \label{po1}
		\end{equation}
		Then for the first integral on the left-hand side of \eqref{po1} we use \eqref{td2} to calculate, 
		\begin{align}
			\frac{1}{\tau} \int_{\Omega} \left(u_k - u_{k-1}\right) v_k dx &= -\frac{1}{\tau} \int_{\Omega} \left(u_k - u_{k-1}\right)\mdiv\left(\ft\left(\left|\nabla u_k \right|^2 \right)\nabla u_k \right) dx + \int_{\Omega} \left(u_k - u_{k-1} \right) u_k dx \nonumber \\
			&= \frac{1}{\tau} \int_{\Omega} \nabla \left(u_k - u_{k-1} \right) \cdot \ft\left(\left|\nabla u_k \right|^2 \right) \nabla u_k dx + \int_{\Omega} \left(u_k - u_{k-1} \right) u_k dx \nonumber \\
			& \geq \frac{1}{p} \int_{\Omega} \left[ \left( \left|\nabla u_k\right|^2 + \tau \right)^{\frac{p}{2}} - \left( \left|\nabla u_{k-1}\right|^2 + \tau \right)^{\frac{p}{2}} \right] dx \nonumber \\
			&+ \bz \int_{\Omega} \left[ \left(\left|\nabla u_k \right|^2 + \tau \right)^{\frac{1}{2}} - \left(\left|\nabla u_{k-1}\right|^2 + \tau\right)^{-\frac{1}{2} } \right] dx \label{po2}
		\end{align}
		Where in the last line we have used \ref{inequalities}. 
		
		For the second integral on the left-hand side of \eqref{po1} we have, 
		\begin{align}
			\int_{\Omega} \left[M\left(\nabla u_k\right) + \tau I \right] \nabla v_k \cdot \nabla v_k dx &= \int_{\Omega} M\left(\nabla u_k \right) \nabla v_k \cdot \nabla v_k dx + \tau \int_{\Omega} \left|\nabla v_k \right|^2 dx \nonumber \\
			& \geq \int_{\Omega} \frac{ \left|\nabla v_k \right|^2 }{ 1 + q \left|\nabla u_k \right| } dx + \tau \int_{\Omega} \left|\nabla v_k \right|^2 dx. \label{po3} 
		\end{align}
		Putting \eqref{po2} and \eqref{po3} back into \eqref{po1} then yields, 
		\begin{align}
			\frac{1}{p\tau}&  \int_{\Omega} \left[ \left( \left|\nabla u_k\right|^2 + \tau \right)^{\frac{p}{2}} - \left( \left|\nabla u_{k-1}\right|^2 + \tau \right)^{\frac{p}{2}} \right] dx + \bz \int_{\Omega} \left[ \left(\left|\nabla u_k \right|^2 + \tau \right)^{\frac{1}{2}} - \left(\left|\nabla u_{k-1}\right|^2 + \tau\right)^{-\frac{1}{2} } \right] dx \nonumber \\
			& + \frac{1}{2} \int_{\Omega} \left(u_k^2 - u_{k-1}^2 \right) dx + \int_{\Omega} \frac{ \left|\nabla v_k \right|^2 }{ 1 + q \left|\nabla u_k \right| } dx + \tau \int_{\Omega} \left|\nabla v_k \right|^2 dx + \tau \int_{\Omega} v_k^2 dx \leq 0. \nonumber 
		\end{align}
		We then multiply through this equation by $\tau$, and then sum the result over $k$ to derive \eqref{apriori1}.
	\end{proof}
	
	With this estimate we now begin proving uniform bounds in explicit function spaces for our sequences of approximate solutions. This is done in the next two propositions.
	\begin{prop}\label{uspaces}
		The sequences $\{\tuj\}$ and $\{\buj\}$ are each uniformly bounded in the space $L^{\infty}\left(0,T; W^{1,p}\left(\Omega\right) \right)$. 
	\end{prop}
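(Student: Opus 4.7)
The plan is to deduce the $L^\infty(0,T; W^{1,p}(\Omega))$ bound from Proposition~\ref{apdiscrete} via a Poincar\'e--Wirtinger inequality, after first verifying that the spatial mean of $\bar u_j$ remains uniformly bounded. The gradient piece is already in \eqref{apriori1}, which gives $\max_{0\leq t\leq T}\int_\Omega |\nabla \bar u_j(\cdot,t)|^p\,dx \leq c$, so only the $L^p$-norm of $\bar u_j$ itself needs to be controlled.

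To handle the mean I would integrate the time-discrete equation \eqref{td1} over $\Omega$; the divergence term drops by the Neumann condition $[M(\nabla u_k)+\tau I]\nabla v_k\cdot\nu=0$, leaving
$$\int_\Omega u_k\,dx - \int_\Omega u_{k-1}\,dx = -\tau^{2} \int_\Omega v_k\,dx, \qquad k=1,\ldots,j.$$
Summing this telescoping identity over $l=1,\ldots,k$ and applying Cauchy--Schwarz twice, together with the bound $\tau\int_{\ot}|\bar v_j|^2\,dxdt \leq c$ from \eqref{apriori1} (equivalently $\sum_{l=1}^j \|v_l\|_{L^2(\Omega)}^2 \leq c/\tau^{2}$), yields
$$\left|\int_\Omega u_k\,dx\right| \leq \left|\int_\Omega u_0\,dx\right| + |\Omega|^{1/2}\tau^{2}\sqrt{k}\left(\sum_{l=1}^{k}\|v_l\|_{L^2(\Omega)}^{2}\right)^{1/2} \leq c\bigl(1 + \sqrt{T\tau}\bigr),$$
which is uniformly bounded in $\tau$ and in $k\leq j$. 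The Poincar\'e--Wirtinger inequality on the Lipschitz domain $\Omega$ then gives
$$\|\bar u_j(\cdot,t)\|_{L^p(\Omega)} \leq C\|\nabla \bar u_j(\cdot,t)\|_{L^p(\Omega)} + C\left|\int_\Omega \bar u_j(\cdot,t)\,dx\right|,$$
and combining with the uniform gradient bound delivers the desired $L^\infty(0,T; W^{1,p}(\Omega))$ estimate for $\bar u_j$.

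The corresponding bound on $\tilde u_j$ follows immediately from its definition \eqref{tujdefn}: for $t\in (t_{k-1},t_k]$, $\tilde u_j(\cdot,t)$ is a convex combination of $u_k$ and $u_{k-1}$, so $\|\tilde u_j(\cdot,t)\|_{W^{1,p}(\Omega)} \leq \|u_k\|_{W^{1,p}(\Omega)} + \|u_{k-1}\|_{W^{1,p}(\Omega)}$ and inherits the uniform bound.

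The only non-routine ingredient is the control of the spatial mean: the zero-flux boundary conditions do not obviously preserve mass in this scheme, but the favorable $\tau^{2}$ prefactor in the identity relating $\int u_k$ and $\int u_{k-1}$, combined with the $\tau^{-2}$ blow-up of $\sum_{l}\|v_l\|_{L^2(\Omega)}^{2}$, keeps the total drift at size $O(\sqrt{\tau})$ and hence uniformly bounded.
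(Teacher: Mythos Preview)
Your proof is correct and follows essentially the same strategy as the paper: integrate the discrete equation over $\Omega$, use the Neumann condition to drop the divergence, bound the resulting right-hand side via Proposition~\ref{apdiscrete}, and then apply the Poincar\'e--Wirtinger inequality together with the gradient bound from \eqref{apriori1}. The only cosmetic differences are that the paper first substitutes $v_k$ out using \eqref{td5} so that the mean identity reads $\frac{d}{dt}\int_\Omega \tilde u_j\,dx = -\tau^2\int_\Omega \bar u_j\,dx$ (and then uses the $\tau\int_\Omega|\bar u_j|^2\leq c$ term from \eqref{apriori1}), and it handles $\tilde u_j$ before $\bar u_j$ rather than the other way around; your direct use of the $\tau\int_{\Omega_T}|\bar v_j|^2\leq c$ term is an equally valid route to the same conclusion.
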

	\begin{proof}
		Continuing, we first multiply through \eqref{td5} by $\tau$ and then add the resulting equation to \eqref{td4} to obtain, 
		\begin{equation}
			\partial_t \tuj = \mdiv\left(\left[M\left(\nabla \buj\right) + \tau I \right] \nabla \bvj\right) + \tau \mdiv\left(\ft\left(\left|\nabla \buj\right|^2 \right) \nabla \buj \right) - \tau^2 \buj  \nonumber 
		\end{equation}
		
		We then use this equation to calculate, 
		\begin{align}
			\frac{d}{dt} \int_{\Omega} \tuj dx &= \int_{\Omega} \partial \tuj dx \nonumber \\
			& = \int_{\Omega} \mdiv\left(\left[M\left(\nabla \buj\right) + \tau I \right] \nabla \bvj\right) dx \nonumber \\
			& +\tau \int_{\Omega} \mdiv\left(\ft\left(\left|\nabla \buj\right|^2 \right) \nabla \buj \right) dx - \tau^2 \int_{\Omega} \buj dx \nonumber \\
			& = - \tau^2 \int_{\Omega} \buj dx  . \label{sp1}
		\end{align}
		Upon integrating \eqref{sp1} with respect to $t$ and utilizing prop \ref{apriori1} we then derive
		\begin{equation}
			\max_{0\leq t \leq T} \left| \int_{\Omega} \tuj dx \right| \leq c. \label{sp2}
		\end{equation}
		Then we use Poincare's inequality along with \eqref{sp2} to estimate, 
		\begin{align}
			\int_{\Omega} \left|\tuj\right|^p dx & \leq c \int_{\Omega} \left|\tuj - \frac{1}{\left|\Omega\right|} \int_{\Omega} \tuj dx \right|^2 dx + c \left| \int_{\Omega} \tuj dx \right|^p \nonumber \\
			& \leq c \int_{\Omega} \left|\nabla \tuj \right|^p dx + c. \label{sp3}
		\end{align}
		Then, to estimate the gradient of $\tuj$, we first choose any $t \in \left(0,T\right]$. Then there exists a number $k$ so that, $t\in \left(t_{k-1} - t_{k} \right]$. Subsequently, we have, 
		\begin{align}
			\int_{\Omega} \left|\nabla \tuj(x,t)\right|^p dx & = \int_{\Omega} \left| \frac{t-t_{k-1}}{\tau} \nabla u_k + \left(1-\frac{t-t_{k-1}}{\tau}\right) \nabla u_{k-1} \right|^p dx \nonumber \\
			& \leq \frac{t-t_{k-1}}{\tau} \int_{\Omega} \left|\nabla u_k \right|^p dx + \left(1-\frac{t-t_{k-1}}{\tau}\right) \int_{\Omega} \left|\nabla u_{k-1} \right|^p dx \nonumber \\
			& \leq c \sup_{0\leq t \leq T} \int_{\Omega} \left|\nabla \buj \right|^p dx \leq c. \label{sp4}
		\end{align}
		Combining this with \eqref{sp3} we can then conclude that the sequence $\{\tuj\}$ is uniformly bounded in the space $L^{\infty}\left(0,T; W^{1,p}\left(\Omega\right) \right)$. It is also clear that, 
		\begin{equation}
			\sup_{0\leq t \leq T}\int_{\Omega} \left|\buj\right|^p dx \leq c \sup_{0\leq t \leq T} \int_{\Omega} \left|\tuj\right|^p dx. \nonumber 
		\end{equation}
		Consequently we also know that $\{\buj\}$ is uniformly bounded in the space $L^{\infty}\left(0,T; W^{1,p}\left(\Omega\right) \right)$.
	\end{proof}
	
	\begin{prop}\label{vspace}
		The sequence $\{\bvj\}$ uniformly bounded in the spaces, $L^{2}\left(0,T;W^{1,\frac{2p}{p+1}}\left(\Omega\right)\right)$ and  $L^2\left(0,T; L^{2p}\left(\Omega\right)\right)$. 
	\end{prop}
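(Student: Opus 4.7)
The plan is to exploit the key a priori estimate from Proposition \ref{apdiscrete}, namely the weighted bound $\int_{\ot} \frac{|\nabla \bvj|^2}{1+q|\nabla \buj|} dxdt \leq c$ together with $\sup_t \int_\Omega |\nabla \buj|^p dx \leq c$, and decouple them via Hölder's inequality to produce an unweighted Lebesgue estimate for $\nabla \bvj$. Specifically, write
\begin{equation*}
|\nabla \bvj|^{\frac{2p}{p+1}} = \left(\frac{|\nabla \bvj|^2}{1+q|\nabla \buj|}\right)^{\frac{p}{p+1}} \cdot (1+q|\nabla \buj|)^{\frac{p}{p+1}},
\end{equation*}
and apply Hölder with conjugate exponents $\frac{p+1}{p}$ and $p+1$ to obtain, after raising to the power $\frac{p+1}{p}$,
\begin{equation*}
\|\nabla \bvj(\cdot,t)\|_{L^{\frac{2p}{p+1}}(\Omega)}^2 \leq \left(\int_\Omega \frac{|\nabla \bvj|^2}{1+q|\nabla \buj|} dx\right) \cdot \left(\int_\Omega (1+q|\nabla \buj|)^p dx\right)^{\frac{1}{p}}.
\end{equation*}

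Integrating in $t$ and pulling out the essential-supremum in time of the second factor (which is controlled by Proposition \ref{apdiscrete}) immediately yields a uniform bound on $\int_0^T \|\nabla \bvj\|_{L^{\frac{2p}{p+1}}(\Omega)}^2 dt$. To upgrade this gradient bound to a bound in $L^2(0,T;W^{1,\frac{2p}{p+1}}(\Omega))$, I would next control the mean of $\bvj$. Integrating \eqref{td2} over $\Omega$ and using the Neumann boundary condition gives $\int_\Omega v_k dx = \tau \int_\Omega u_k dx$, hence $|\int_\Omega \bvj dx| \leq c\tau$ by Proposition \ref{uspaces}. Combining this with the Poincaré–Wirtinger inequality applied to $\bvj - \frac{1}{|\Omega|}\int_\Omega \bvj dx$ then produces the full $W^{1,\frac{2p}{p+1}}$ bound.

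For the second assertion, I would invoke the Sobolev embedding theorem. Since $\Omega \subset \mathbb{R}^2$ and $\frac{2p}{p+1}<2$, the Sobolev conjugate is $\left(\frac{2p}{p+1}\right)^* = \frac{2 \cdot \frac{2p}{p+1}}{2 - \frac{2p}{p+1}} = 2p$, so $W^{1,\frac{2p}{p+1}}(\Omega) \hookrightarrow L^{2p}(\Omega)$ continuously. Squaring and integrating in time then converts the bound established above into the desired $L^2(0,T;L^{2p}(\Omega))$ estimate.

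No step looks genuinely difficult; the mild subtlety is choosing the Hölder split so that the gradient-of-$\bvj$ factor appears with the weight $(1+q|\nabla \buj|)^{-1}$ to match the available a priori estimate while leaving $(1+q|\nabla \buj|)^p$ in the other factor, which is exactly the quantity controlled by the $L^\infty_t W^{1,p}_x$ bound on $\buj$ from Proposition \ref{uspaces}. The exponent $\frac{2p}{p+1}$ in the statement is precisely what makes these two ingredients interpolate correctly, so once the correct Hölder split is written down the rest is bookkeeping.
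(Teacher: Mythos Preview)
Your proposal is correct and follows essentially the same route as the paper: the identical H\"older split of $|\nabla \bvj|^{\frac{2p}{p+1}}$ against the weight $(1+q|\nabla \buj|)^{p/(p+1)}$, the same control of the mean via integrating \eqref{td2}, and the Sobolev--Poincar\'e step to reach $L^{2p}(\Omega)$. The only cosmetic difference is that the paper packages the last step as a single Poincar\'e inequality directly bounding $\|\bvj\|_{2p,\Omega}$ by $\|\nabla \bvj\|_{\frac{2p}{p+1},\Omega}$, whereas you first obtain the $W^{1,\frac{2p}{p+1}}$ bound and then invoke the Sobolev embedding; these are equivalent.
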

	\begin{proof}
		For the proof we first use H\"older's inequality, to estimate, 
		\begin{align}
			\left\|\nabla \bvj \right\|_{\frac{2p}{P+1}, \Omega} &= \left(\int_{\Omega} \left(1+q\left|\nabla u \right| \right)^{\frac{p}{p+1}} \frac{\left|\nabla \bvj \right|^{\frac{2p}{p+1} } }{\left(1+q\left|\nabla \buj\right|\right)^{\frac{p}{p+1}} } dx \right)^{\frac{p+1}{2p} } \nonumber \\
			& \leq \left\| 1+q\left|\nabla \buj \right|\right\|^{\frac{1}{2}}_{p,\Omega} \left( \int_{\Omega} \frac{\left|\nabla \bvj \right|^2 }{1+q\left|\nabla \buj \right| } dx \right)^{\frac{1}{2}}  \label{vs1} 
		\end{align}
		Now square each side of \eqref{vs1} and integrate with respect to $t$ and use proposition \ref{apriori1} to obtain, 
		\begin{align}
			\left\|\nabla \bvj \right\|_{L^2\left(0,T;L^{\frac{2p}{p+1}}\left(\Omega\right) \right) } & \leq \sup_{0 \leq t \leq T } \left\| 1+q\left|\nabla\buj\right| \right\|_{p,\Omega} \int_{\ot} \frac{ \left|\nabla \bvj \right|^2 }{1 + q\left|\nabla \buj\right| } dxdt \leq c. \label{z}
		\end{align}
		Next, we integrate \eqref{td5} over $\Omega$ to find, 
		\begin{align}
			\int_{\Omega} \bvj dx &= - \int_{\Omega} \mdiv\left(\ft\left(\left|\nabla\buj\right|^2\right)\nabla\buj\right) dx + \tau \int_{\Omega} \buj dx \nonumber \\
			& = \tau \int_{\Omega} \buj dx. \label{vs2}
		\end{align}
		Then we use \ref{apriori1} to conclude from \eqref{z} that, 
		\begin{equation}
			\max_{0\leq t \leq T} \left| \int_{\Omega} \bvj dx \right| \leq c. \label{vs3} 
		\end{equation}
		We then use Poincare's inequality and \eqref{vs3} to obtain, 
		\begin{align}
			\left\|\bvj \right\|_{2p,\Omega} & \leq c \left\| \bvj - \frac{1}{\left|\Omega\right|} \int_{\Omega} \bvj dx \right\|_{2p,\Omega} + c\left| \int_{\Omega} \bvj dx \right| \nonumber \\
			& \leq c \left\| \nabla \bvj \right\|_{\frac{2p}{p+1}, \Omega} + c. \label{vs4}
		\end{align}
		Square each side of \eqref{vs4} then integrate with respect to $t$, and utilize \eqref{z} to find,
		\begin{equation}
			\left\| \bvj \right\|_{L^2\left(0,T; L^{2p}\left(\Omega\right)\right)} \leq c. \label{vs5} 
		\end{equation}
		With this we can conclude the proof. 
	\end{proof}
	
	On account of propositions \ref{uspaces} and \ref{vspace} we can conclude that there are at least subsequences, which we will not relabel, which converge weakly in there respective spaces. Obviously these are not enough to justify passing to the limit in our equations. Due to the nonlinearity present in \eqref{td4} we must have the strong convergence of $\{\nabla \buj\}$. This will guarantee a subsequence that converges point wise a.e. on $\ot$. Without this condition we would be unable to prove that the coefficient matrix $M\left(\nabla \buj\right)$ converges to $M\left(\nabla u\right)$ in a meaningful way. In the next few propositions we establish the necessary compactness.
	
	\begin{prop}\label{ucomp}
		Passing to appropriate sub-sequences where necessary, $\{\tuj\}$ and $\{\buj\}$ both converge strongly in $C\left(0,T;L^p\left(\Omega\right)\right)$, and have the same limit $u$.
	\end{prop}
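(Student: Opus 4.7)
The plan is to apply the Lions--Aubin lemma (Lemma \ref{la}) to the piecewise-linear interpolant $\tuj$ and then transfer the resulting strong compactness to the piecewise-constant $\buj$ by direct comparison, since the two differ by an explicit increment that will be shown to be small.

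First I would establish a uniform bound for $\partial_t \tuj$ in the dual space $L^2\bigl(0,T;(W^{1,\frac{2p}{p-1}}(\Omega))^*\bigr)$. Testing \eqref{td4} against $\psi\in W^{1,\frac{2p}{p-1}}(\Omega)$, the principal term is $\int_\Omega M(\nabla\buj)\nabla\bvj\cdot\nabla\psi\,dx$. I would split this via the weighted Cauchy--Schwarz inequality
\begin{equation*}
\left|\int_\Omega M(\nabla\buj)\nabla\bvj\cdot\nabla\psi\,dx\right|\leq c\left(\int_\Omega\frac{|\nabla\bvj|^2}{1+q|\nabla\buj|}\,dx\right)^{1/2}\left(\int_\Omega(1+q|\nabla\buj|)|\nabla\psi|^2\,dx\right)^{1/2},
\end{equation*}
and then apply H\"older with conjugate pair $(p,p/(p-1))$ to the second factor to produce $\|1+q|\nabla\buj|\|_{p,\Omega}^{1/2}$ and $\|\nabla\psi\|_{2p/(p-1),\Omega}$. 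Proposition \ref{uspaces} bounds the former uniformly in $t$, while Proposition \ref{apdiscrete} delivers the first factor after squaring and integrating in time. The regularizing contributions $\tau\nabla\bvj$ and $\tau\bvj$ are controlled by the bounds $\tau\|\nabla\bvj\|_{L^2(\Omega\times(0,T))}^2,\,\tau\|\bvj\|_{L^2(\Omega\times(0,T))}^2\leq c$ from Proposition \ref{apdiscrete} together with the two-dimensional Sobolev embedding $W^{1,2p/(p-1)}(\Omega)\hookrightarrow L^\infty(\Omega)$; in fact these vanish with $\tau$.

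With this bound in hand, Proposition \ref{uspaces} gives $\tuj$ uniformly bounded in $L^\infty(0,T;W^{1,p}(\Omega))$, and I would apply Lemma \ref{la} to the triple $X_0=W^{1,p}(\Omega)\subset X=L^p(\Omega)\subset X_1=(W^{1,2p/(p-1)}(\Omega))^*$. The first inclusion is compact by Rellich--Kondrachov, and the second is continuous by duality using $W^{1,2p/(p-1)}(\Omega)\hookrightarrow L^{p/(p-1)}(\Omega)$. Case (ii) of Lemma \ref{la} (with $p=\infty$, $q=2$) then yields a subsequence $\tuj\to u$ strongly in $C([0,T];L^p(\Omega))$. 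To transfer this to $\buj$, I would note that on each interval $(t_{k-1},t_k]$,
\begin{equation*}
\buj(t)-\tuj(t)=\left(1-\tfrac{t-t_{k-1}}{\tau}\right)(u_k-u_{k-1}),\qquad \|u_k-u_{k-1}\|_{L^p}=\|\tuj(t_k)-\tuj(t_{k-1})\|_{L^p}.
\end{equation*}
Since $\tuj\to u\in C([0,T];L^p(\Omega))$ uniformly and $u$ is thereby uniformly continuous into $L^p(\Omega)$, the right-hand side tends to $0$ uniformly in $k$ as $\tau\to 0$; hence $\|\buj-\tuj\|_{L^\infty(0,T;L^p)}\to 0$, and $\buj\to u$ as well.

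The main obstacle is the dual-space estimate on $\partial_t\tuj$. The energy identity controls only the weighted quantity $|\nabla\bvj|^2/(1+q|\nabla\buj|)$, not $\nabla\bvj$ directly, so any naive duality argument against the flux $M(\nabla\buj)\nabla\bvj$ fails. The weight-redistribution in the Cauchy--Schwarz step above is precisely what allows the $L^\infty_tL^p_x$-bound on $|\nabla\buj|$ to offset the deficit in the first factor, producing just enough temporal regularity to activate Aubin--Lions.
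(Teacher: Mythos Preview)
Your proof is correct and follows essentially the same strategy as the paper: bound $\partial_t\tuj$ in $L^2\bigl(0,T;(W^{1,\frac{2p}{p-1}}(\Omega))^*\bigr)$, invoke Aubin--Lions with the triple $W^{1,p}(\Omega)\hookrightarrow L^p(\Omega)\hookrightarrow (W^{1,\frac{2p}{p-1}}(\Omega))^*$, and then compare $\buj$ to $\tuj$.

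Two minor differences are worth noting. For the dual estimate, the paper simply uses the boundedness of $M(\nabla\buj)$ together with Proposition~\ref{vspace} (which already gives $\bvj\in L^2(0,T;W^{1,\frac{2p}{p+1}}(\Omega))$), whereas you re-derive the needed bound from scratch via the weighted Cauchy--Schwarz splitting; both routes are equivalent, since Proposition~\ref{vspace} is itself proved by that same splitting. For the transfer to $\buj$, your argument---using that $\tuj\to u$ in $C([0,T];L^p(\Omega))$ implies $\max_k\|u_k-u_{k-1}\|_{L^p}\to 0$ via uniform continuity of the limit $u$---is in fact cleaner and more complete than what appears in the paper, which only computes $\int_{\Omega_T}(\tuj-\buj)\,dx\,dt=O(\tau)$ and does not by itself deliver convergence in $C([0,T];L^p(\Omega))$.
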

	\begin{proof}
		First notice that from proposition \ref{uspaces} we have that $\{\tuj\}$ is uniformly bounded in $L^{\infty}\left(0,T;W^{1,p}\left(\Omega\right) \right)$. Then, let $\varphi$ be any function in the space $W^{1,\frac{2p}{p-1}}\left(\Omega\right)$. We can then estimate with the aid of proposition \ref{vspace}, and \eqref{td4},
		\begin{align}
			\langle \partial_t \tuj, \varphi \rangle &= \int_{\Omega} \partial_t \tuj \varphi dx \nonumber \\
			&= \int_{\Omega} \mdiv\left(\left[M\left(\nabla \buj \right) + \tau I\right] \nabla \bvj \right) \varphi dx - \tau \int_{\Omega} \bvj \varphi dx \nonumber \\
			&= - \int_{\Omega} \left[ M\left(\nabla \buj \right) + \tau I \right] \nabla \bvj \cdot \nabla \varphi dx - \tau \int_{\Omega} \bvj \varphi dx \nonumber \\
			& \leq c \left\| \nabla \bvj \right\|_{\frac{2p}{p+1}, \Omega} \left\| \nabla \varphi \right\|_{\frac{2p}{p-1}, \Omega} + c \left\| \bvj \right\|_{\frac{2p}{p+1},\Omega} \left\|\varphi \right\|_{\frac{2p}{p-1},\Omega} \nonumber \\
			& \leq c \left\| \bvj \right\|_{W^{1,\frac{2p}{p+1}}\left(\Omega\right)} \left\| \varphi \right\|_{W^{1,\frac{2p}{p-1}}\left(\Omega\right)}. \label{uc1} 
		\end{align}
		As a consequence of \eqref{uc1} we can then conclude that, 
		\begin{equation}
			\left\| \partial_t \tuj \right\|_{\left( W^{1,\frac{2p}{p-1}}\left(\Omega\right) \right)^*} \leq c \left\| \bvj \right\|_{W^{1,\frac{2p}{p+1}}\left(\Omega\right) }. \label{uc2} 
		\end{equation}
		We then square each side of \eqref{uc2} and then integrate the resulting inequality with respect to $t$ to derive, using proposition \ref{vspace},
		\begin{equation}
			\int_{0}^{T} \left\|\partial_t \tuj \right\|^2_{\left( W^{1,\frac{2p}{p-1}}\left(\Omega\right) \right)^*} dt \leq c. \label{uc3}
		\end{equation}
		That is to say, we have shown that $\partial_t \tuj$ is uniformly bounded in the space $L^{2} \left(0,T; \left(W^{1,\frac{2p}{p-1}}\left(\Omega\right) \right)^* \right)$, while $\{\tuj\}$ is uniformly bounded in $L^{\infty}\left(0,T;W^{1,p}\left(\Omega\right)\right)$. Then, $W^{1,p}\left(\Omega\right)$ is compactly embedded in the space $L^{p}\left(\Omega\right)$. On the other hand, for any $\xi \in L^p\left(\Omega\right)$ and $\varphi \in W^{1,\frac{2p}{p-1}}\left(\Omega\right)$ we have by H\'older's inequality, 
		\begin{align}
			\int_{\Omega} \xi \varphi dx & \leq \left\| \xi \right\|_{p,\Omega} \left\| \varphi \right\|_{\frac{p}{p-1},\Omega} \nonumber \\
			& \leq c \left\| \xi \right\|_{p,\Omega} \left\|\varphi \right\|_{\frac{2p}{p-1},\Omega}. \nonumber 
		\end{align}
		We can then conclude that $L^p\left(\Omega\right)$ is continuously embedded in $\left(W^{1,\frac{2p}{p-1}}\left(\Omega\right)\right)^*$. This then puts us in a position to be able to use the Aubin-Lions Lemma \ref{la}. Upon doing so we obtain that the sequence $\{\tuj\}$ is precompact in the space $C\left([0,T]; L^p\left(\Omega\right)\right)$.
		
		Next, to demonstrate that the sequence $\{\buj\}$ is precompact as well it is enough to show it has the same limit as $\{\tuj\}$. Towards this end we first compute, 
		\begin{align*}
			\int_{0}^{T} \left(\tuj-\buj\right) dt &= \sum_{k=1}^{j} \left[ \int_{t_{k-1}}^{t_k} \frac{t-t_{k-1}}{\tau} \left(u_k-u_{k-1}\right) - \left(u_k-u_{k-1}\right) \right] dt \\
			&= -\frac{1}{2} \tau \sum_{k=1}^{j} \left(u_k-u_{k-1}\right) \\
			& = -\frac{1}{2} \tau \left( u_j - u_0\right).
		\end{align*}
		Now upon integrating the above equation over $\Omega$ and using \ref{uspaces} we find,
		\begin{align*}
			\int_{\ot} \left(\tuj-\buj\right) dx &\leq c \tau \sup_{0\leq t \leq T} \int_{\Omega} \tuj dx \\
			& \leq c\tau. 
		\end{align*} 
		This completes the proof.  
	\end{proof}
	
	Due to the nonlinear term $M\left(\nabla \buj \right) \nabla \bvj$ we will need further the strong convergence of the gradients $\{\nabla \buj\}$. This is done in the next proposition. 
	
	\begin{prop}\label{nablaucomp}
		The sequence $\{\nabla \buj\}$ is precompact in $\left(L^p\left(\ot\right)\right)^2$. 
	\end{prop}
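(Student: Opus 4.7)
The plan is a Minty--Browder style monotonicity argument. Propositions~\ref{ucomp} and \ref{vspace}, together with the monotonicity of $z\mapsto \ft(|z|^2)z$, will let me upgrade the weak convergence $\nabla\buj\rightharpoonup\nabla u$ in $L^p(\ot)$ (guaranteed by the $L^\infty(0,T;W^{1,p}(\Omega))$-bound of Proposition~\ref{uspaces}) to strong convergence. Throughout, write $\tau_j=T/j$ and pass to the subsequence along which $\buj\to u$ in $C([0,T];L^p(\Omega))$.

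\textbf{Step 1 (test and control the right-hand side).} Use $\buj-u$ as a test function in the integral form of \eqref{td5} (integrated over $(0,T)$) to obtain
\begin{equation*}
	\int_{\ot}\ft(|\nabla\buj|^2)\nabla\buj\cdot\nabla(\buj-u)\,dxdt + \tau_j\int_{\ot}\buj(\buj-u)\,dxdt = \int_{\ot}\bvj(\buj-u)\,dxdt.
\end{equation*}
The $\tau_j$ term vanishes as $j\to\infty$ because $\tau_j\to 0$ and $\{\buj\}$ is uniformly bounded in $L^\infty(0,T;L^p(\Omega))$. For the right-hand side, Proposition~\ref{vspace} gives a uniform $L^2(0,T;L^{2p}(\Omega))$-bound on $\{\bvj\}$, while the strong $C([0,T];L^p(\Omega))$-convergence $\buj\to u$, interpolated against the $L^\infty(0,T;W^{1,p}(\Omega))$-bound via the 2D Sobolev embedding, yields $\buj-u\to 0$ strongly in $L^2(0,T;L^{2p/(2p-1)}(\Omega))$; hence the right-hand side vanishes.

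\textbf{Step 2 (split off a weak-strong pairing).} Decompose
\begin{align*}
	\int_{\ot}\ft(|\nabla\buj|^2)\nabla\buj\cdot\nabla(\buj-u)\,dxdt &= \int_{\ot}\bigl(\ft(|\nabla\buj|^2)\nabla\buj - \ft(|\nabla u|^2)\nabla u\bigr)\cdot\nabla(\buj-u)\,dxdt \\
	&\quad+\int_{\ot}\ft(|\nabla u|^2)\nabla u\cdot\nabla(\buj-u)\,dxdt.
\end{align*}
The uniform pointwise bound $|\ft(|z|^2)z|\leq c(|z|^{p-1}+1)$ for $\tau\in(0,1)$, combined with the a.e.\ pointwise limit $\ft(|\nabla u|^2)\nabla u\to |\nabla u|^{p-2}\nabla u+\bz\nabla u/|\nabla u|$, gives (by dominated convergence) strong $L^{p/(p-1)}(\ot)$-convergence; pairing against $\nabla(\buj-u)\rightharpoonup 0$ in $L^p(\ot)$ annihilates the second integral. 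Together with Step~1 this forces
\begin{equation*}
	\lim_{j\to\infty}\int_{\ot}\bigl(\ft(|\nabla\buj|^2)\nabla\buj - \ft(|\nabla u|^2)\nabla u\bigr)\cdot\nabla(\buj-u)\,dxdt = 0,
\end{equation*}
and the integrand is a.e.\ non-negative by monotonicity.

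\textbf{Step 3 and the main obstacle.} To convert this vanishing pairing into $\|\nabla\buj-\nabla u\|_{L^p(\ot)}\to 0$, I drop the non-negative $\bz(|z|^2+\tau_j)^{-1/2}z$ piece and apply a $\tau_j$-regularized analogue of Lemma~\ref{odenone} to $(|z|^2+\tau_j)^{(p-2)/2}z$: for $p\geq 2$ a direct pointwise lower bound of order $|x-y|^p$ finishes the argument, while for $1<p<2$ one repeats the H\"older interpolation used in \eqref{c15} with weights $(1+|\nabla\buj|^2+|\nabla u|^2+\tau_j)^{(p-2)/2}$, the uniform $L^\infty(0,T;W^{1,p}(\Omega))$-bound of Proposition~\ref{uspaces} supplying the required equi-integrability. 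Passing to a further subsequence then gives $\nabla\buj\to\nabla u$ a.e.\ and in $L^p(\ot)$. The chief technical obstacle is exactly the $1<p<2$ regime: monotonicity is only degenerate, and the regularization parameter $\tau_j$ must be carried through the interpolation (which is the reason $\ft$, rather than its formal limit, is retained on both sides of the split in Step~2).
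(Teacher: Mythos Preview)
Your argument is correct and takes a genuinely different route from the paper. The paper proves precompactness by a Cauchy-sequence argument: it subtracts \eqref{td5} at two indices $j_1,j_2$, tests with $\bar u_{j_1}-\bar u_{j_2}$, and invokes Lemma~\ref{odenone} directly on the difference $F_{\tau_1}(|\nabla\bar u_{j_1}|^2)\nabla\bar u_{j_1}-F_{\tau_2}(|\nabla\bar u_{j_2}|^2)\nabla\bar u_{j_2}$, then shows the right-hand side vanishes by pairing the strong convergence of $\buj$ from Proposition~\ref{ucomp} against the weak bound on $\bvj$ from Proposition~\ref{vspace}. You instead fix the weak limit $u$ in advance, test the single equation with $\buj-u$, and run a Minty--Browder decomposition, peeling off the cross term $\int F_{\tau_j}(|\nabla u|^2)\nabla u\cdot\nabla(\buj-u)$ and killing it via a strong-$L^{p'}$\,/\,weak-$L^p$ pairing. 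Your route has the practical advantage that the monotonicity step is applied with a \emph{single} regularization parameter $\tau_j$ on both arguments, so the pointwise inequality is immediate; the paper's version carries two distinct parameters $\tau_1,\tau_2$ in \eqref{nu3}, and since $z\mapsto F_\tau(|z|^2)z$ with varying $\tau$ is not jointly monotone, that step strictly needs an additional (easy) correction term that the paper suppresses. Conversely, the paper's approach avoids having to pass to the limit in $F_{\tau_j}(|\nabla u|^2)\nabla u$ by dominated convergence and does not need to identify the weak limit of $\nabla\buj$ beforehand. Both proofs rest on the same three ingredients---strong compactness of $\buj$ from Proposition~\ref{ucomp}, the bound on $\bvj$ from Proposition~\ref{vspace}, and the H\"older interpolation of \eqref{c15} in the $1<p<2$ regime---so the difference is structural rather than analytic.
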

	\begin{proof}
		For the proof, we first derive from \eqref{td5} the equation,
		\begin{align}
			- \mdiv\left( F_{\tau_1}\left(\left|\nabla \bar{u}_{j_1} \right|^2\right)\nabla \bar{u}_{j_1} - F_{\tau_2}\left(\left|\nabla \bar{u}_{j_2} \right|^2\right)\nabla \bar{u}_{j_2} \right) + \tau_1 \bar{u}_{j_1} - \tau_2 \bar{u}_{j_2} = \bar{v}_{j_1} - \bar{v}_{j_1}, \ \ \mbox{ in $\ot$. } \label{nu1}
		\end{align}
		Next, we use $\left(\buj - u \right)$ as a test function above to find, 
		\begin{align}
			\int_{\ot}& \left[  F_{\tau_1}\left(\left|\nabla \bar{u}_{j_1} \right|^2\right)\nabla \bar{u}_{j_1} - F_{\tau_2}\left(\left|\nabla \bar{u}_{j_2} \right|^2\right)\nabla \bar{u}_{j_2} \right] \cdot \left( \nabla \bar{u}_{j_1} - \nabla \bar{u}_{j_2} \right) dxdt \nonumber \\
			+ &\int_{\ot} \left(\tau_1 \bar{u}_{j_1} - \tau_2 \bar{u}_{j_2}\right) \left( \bar{u}_{j_1} - \bar{u}_{j_2} \right) dxdt = \int_{\ot} \left( \bar{v}_{j_1} - \bar{v}_{j_2} \right)\left(\bar{u}_{j_1} - \bar{u}_{j_2} \right) dxdt \label{nu2}
		\end{align}
	
		As before there are now two cases. The first case we consider is that $p>2$. We then derive from lemma \ref{inequalities} that, 
		\begin{equation}
			\left( F_{\tau_1}\left(\left|\nabla \bar{u}_{j_1} \right|^2 \right) \nabla \bar{u}_{j_1} - F_{\tau_2}\left(\left|\nabla \bar{u}_{j_2} \right|^2\right) \nabla \bar{u}_{j_2} \right) \cdot \left( \nabla \bar{u}_{j_1} - \nabla \bar{u}_{j_2} \right) \geq c \left|\nabla \bar{u}_{j_1} - \nabla \bar{u}_{j_2} \right|^p, \label{nu3}
		\end{equation}
		We then use \eqref{nu3} in \eqref{nu2} to find, 
		\begin{equation}
			c \int_{\ot} \left|\nabla \bar{u}_{j_1} - \nabla \bar{u}_{j_2} \right|^p dxdt + \int_{\ot} \left(\tau_1 \bar{u}_{j_1} - \tau_2 \bar{u}_{j_2}\right) \left( \bar{u}_{j_1} - \bar{u}_{j_2} \right) dxdt = \int_{\ot} \left( \bar{v}_{j_1} - \bar{v}_{j_2} \right)\left(\bar{u}_{j_1} - \bar{u}_{j_2} \right)  dxdt \label{nu4}
		\end{equation}

		Then from lemma's \ref{ucomp} and \ref{apdiscrete} we can conclude that, 
		\begin{align}
			 \int_{\ot} \left(\tau_1 \bar{u}_{j_1} - \tau_2 \bar{u}_{j_2}\right) \left( \bar{u}_{j_1} - \bar{u}_{j_2} \right) dxdt & \leq \left\| \tau_1 \bar{u}_{j_1} - \tau_2 \bar{u}_{j_2}\right\|_{\frac{p}{p-1},\ot} \left\| \bar{u}_{j_1} - \bar{u}_{j_2} \right\|_{p,\ot} \nonumber \\
			 & \leq c \left\| \bar{u}_{j_1} - \bar{u}_{j_2} \right\|_{p,\ot} \nonumber \\
			 & \rightarrow 0, \ \ \mbox{ as $j\rightarrow \infty$. } \label{nu5}
		\end{align}
		Then for $p>2$, we have from \ref{vspace} that $\{\bar{v}_j\}$ is weakly convergent in the space $L^{\frac{p}{p-1}}\left(\ot\right)$ and then from \ref{ucomp} that $\{\buj\}$ converges strongly in $L^p\left(\ot\right)$. As a result the product of the two sequences converges weakly and we have, 
		\begin{equation}
			\int_{\ot} \left(\bvj - v \right)\left(\buj - u \right) dxdt \rightarrow 0, \ \ \mbox{ as $j\rightarrow \infty$. } \label{nu6}
		\end{equation}
		We can then conclude from \eqref{nu4}, \eqref{nu5} and \eqref{nu6} that, at least for a subsequence,
		\begin{equation}
			\nabla \buj \rightarrow \nabla u, \ \ \mbox{ strongly in $\left(L^p\left(\ot\right)\right)^{2}$ and a.e. on $\ot$. } \label{nu7}
		\end{equation}
		
		The second case is that $1<p < 2$. This time we use the second inequality in \ref{inequalities} to derive in an entirely similar way to \eqref{c15} 
		\begin{align}
			\int_{\ot} & \left|\nabla \bar{u}_{j_1} - \nabla \bar{u}_{j_2} \right|^p dxdt \nonumber \\
			&\leq c \left( \int_{\ot} \left( F_{\tau_1}\left(\left|\nabla \bar{u}_{j_1} \right|^2 \right) \nabla \bar{u}_{j_1} - F_{\tau_2}\left(\left|\nabla \bar{u}_{j_2} \right|^2\right) \nabla \bar{u}_{j_2} \right) \cdot \left( \nabla \bar{u}_{j_1} - \nabla \bar{u}_{j_2} \right) dxdt \right)^{\frac{p}{2}} \nonumber \\
			&\leq c \left( \int_{\ot} \left(\tau_1 \bar{u}_{j_1} - \tau_2 \bar{u}_{j_2} \right)\left(\bar{u}_{j_1} - \bar{u}_{j_2} \right) dxdt \right)^{\frac{p}{2}} \nonumber \\
			& + c \left( \int_{\ot} \left(\bar{v}_{j_1} - \bar{v}_{j_2} \right)\left( \bar{u}_{j_1} - \bar{u}_{j_2} \right) dxdt \right)^{\frac{p}{2}} \label{nu8}
		\end{align}
		Then, owing to the fact that the Sobolev conjugate of $p$, $p^* =\frac{2p}{2-p} >2$, for all $1<p<2$, we can conclude from lemma \ref{ucomp} and \ref{apriori1} that,
		\begin{equation}
			\buj \rightarrow u, \ \ \mbox{strongly in $L^2\left(\ot\right)$.} \label{nu9}
		\end{equation}
		Then from \eqref{apriori1} we also know that $\tau\buj$ is bounded in $L^2\left(\ot\right)$. This and \eqref{nu9} then imply,
		\begin{align}
			\int_{\ot} \left(\tau_1 \bar{u}_{j_1} - \tau_2 \bar{u}_{j_2} \right) \left( \bar{u}_{j_1} - \bar{u}_{j_2} \right) dxdt & \leq \left\| \tau_1 \bar{u}_{j_1} - \tau_2 \bar{u}_{j_2} \right\|_{2,\ot} \left\| \bar{u}_{j_1} - \bar{u}_{j_2} \right\|_{2,\ot} \nonumber \\
			& \leq c \left\| \bar{u}_{j_1} - \bar{u}_{j_2} \right\|_{2,\ot} \rightarrow 0. \label{nu10}
		\end{align}
		Next since $p>1$, we can derive from lemma \ref{vspace} that $\{\bvj\}$ is bounded in $L^2\left(\ot\right)$. From this and \eqref{nu9} we then have, 
		\begin{equation}
			\int_{\ot} \left(\bar{v}_{j_1} - \bar{v}_{j_2} \right) \left( \bar{u}_{j_1} - \bar{u}_{j_2} \right) dxdt \rightarrow 0. \label{nu11}
		\end{equation}
		Finally putting together \eqref{nu11}, \eqref{nu10}, with \eqref{nu8} finishes the proof of the proposition. 
	\end{proof}

	Summarizing the proceeding propositions, we have upon passing to appropriate sub-sequences, which we won't relabel,
	\begin{align}
		\buj \rightarrow u, \ \ \mbox{ strongly in $C\left(\left[0,T\right];L^p\left(\Omega\right)\right)$ and a.e. on $\ot$,} \label{conv1} \\
		\tuj \rightarrow u, \ \ \mbox{ strongly in $C\left(\left[0,T\right];L^p\left(\Omega\right)\right)$ and a.e. on $\ot$,} \label{conv2} \\
		\nabla \buj \rightarrow \nabla u, \ \ \mbox{ strongly in $L^p\left(\ot\right)$ and a.e. on $\ot$, } \label{conv3} \\
		\bvj \rightarrow v, \ \ \mbox{ weakly in $L^{2}\left(0,T;W^{1,\frac{2p}{p+1}}\left(\Omega\right)\right)$. } \label{conv4} 
	\end{align}
	
	From \eqref{conv3} along with the fact that $M\left(\nabla \buj\right)$ is uniformly bounded we can conclude that, 
	\begin{equation}
		M\left(\nabla \buj \right) \rightarrow M\left(\nabla u \right), \ \ \mbox{ strongly in $\left(L^{\delta}\left(\ot\right)\right)_{2\times 2} $ for all $1\leq \delta < \infty$, and a.e. on $\ot$. } \label{conv5}
	\end{equation}
	Then utilizing \eqref{conv5} and \eqref{conv4} we can conclude that, 
	\begin{equation}
		M\left(\nabla \buj \right) \nabla \bvj \rightarrow M\left(\nabla u \right) \nabla v, \ \ \mbox{ weakly in $L^{\delta}\left(\ot\right)$ for each $1\leq \delta < \frac{2p}{p+1}$. } \label{conv7} 
	\end{equation}
	Then, from \ref{ucomp} we easily have that $\tau \buj \rightarrow 0$ strongly in $L^p\left(\ot\right)$ and a.e on $\ot$. Thus, together with \eqref{conv3} then implies, 
	\begin{equation}
		-\Delta_p \buj \rightarrow v, \ \ \mbox{ weakly in $L^{2}\left(0,T;W^{1,\frac{2p}{p+1}}\left(\Omega\right)\right)$ . }
	\end{equation}
	With this, we are finally ready to pass to the limit in our sequence of approximate solutions. This completes the proof of our main theorem.

\end{document}